\newcommand{\I}{\mbox{${\mathbb I}$}}
\newcommand{\R}{\mbox{${\mathbb R}$}}
\newcommand{\Ha}{\mbox{${\mathcal H}$}}
\newcommand{\Ly}{\mbox{${\mathcal L}$}}
\newcommand{\Qa}{\mbox{${\mathcal Q}$}}
\newcommand{\tr}{{\rm tr}}
\newcommand{\ric}{{\rm Ric}}
\def\numberwithin#1#2{\@ifundefined{c@#1}{\@nocnterrr}{%
  \@ifundefined{c@#2}{\@nocnterr}{%
  \@addtoreset{#1}{#2}%
  \toks@\expandafter\expandafter\expandafter{\csname the#1\endcsname}%
  \expandafter\xdef\csname the#1\endcsname
    {\expandafter\noexpand\csname the#2\endcsname
     .\the\toks@}}}}
\numberwithin{equation}{section}
\newtheorem{thm}[equation]{Theorem}
\newtheorem{lemma}[equation]{Lemma}
\newtheorem{prop}[equation]{Proposition}
\newtheorem{cor}[equation]{Corollary}
\newtheorem{ex}[equation]{Example}
\newtheorem{rem}[equation]{Remark}
\newenvironment{rmk}{\begin{rem} \em}{\end{rem}}
\begin{document}

\title{Non-k\"ahler Expanding Ricci Solitons}
\author{Andrew S. Dancer}
\address{Jesus College, Oxford University, OX1 3DW, United Kingdom}
\email{dancer@maths.ox.ac.uk}
\author{McKenzie Y. Wang}
\address{Department of Mathematics and Statistics, McMaster
University, Hamilton, Ontario, L8S 4K1, Canada}
\email{wang@mcmaster.ca}
\thanks{The second author is partially supported by NSERC Grant No. OPG0009421}
\date{revised \today}

\begin{abstract}
We produce new examples of non-K\"ahler complete expanding gradient Ricci solitons
 on trivial vector bundles over a product of Einstein manifolds with positive scalar
curvature.
\end{abstract}

\maketitle

\noindent{{\it Mathematics Subject Classification} (2000):
53C25, 53C44}

\bigskip
\setcounter{section}{-1}

\section{\bf Introduction}

A Ricci soliton consists of a complete Riemannian metric $g$ and a
complete vector field $X$ on a manifold which give a solution to the equation:
\begin{equation} \label{LieRS}
{\ric}(g) + \frac{1}{2} \,{\sf L}_{X} g + \frac{\epsilon}{2} \, g = 0
\end{equation}
where $\epsilon$ is a real constant and $\sf L$ denotes the Lie derivative.
The soliton is called {\em steady} if $\epsilon =0$, {\em expanding}
if $\epsilon > 0,$ and {\em shrinking} if $\epsilon < 0$. If $X$ is a
Killing field (including the case $X=0$), $g$ is then Einstein. Such a soliton
is called {\em trivial}. Our main focus in this paper will be on nontrivial solitons.

Most of the known examples of nontrivial Ricci solitons are K\"ahler, e.g., \cite{Ko},
\cite{Ca}, \cite{ChV}, \cite{Gu}, \cite{PTV}, \cite{FIK}, \cite{WZ}, \cite{PS},
\cite{ACGT}, \cite{DW1}, \cite{Ya}. In \cite{DW2} we produced a family of
non-K\"ahler steady solitons generalising examples of Bryant and Ivey \cite{Bry}, \cite{Iv}.

In this article we produce expanding analogues of the examples in
\cite{DW2}; as in that paper our examples are warped products (over an
interval) with an arbitrary number $r$ of factors. In fact the
underlying manifolds are the same as in the steady case: trivial
vector bundles over a product of Einstein manifolds with positive
scalar curvature.  Since one can easily arrange for the dimensions of
the total spaces of the vector bundles to be odd, our solitons will in
general not be K\"ahler. Regarding earlier work, for $r=1$, a
discussion can be found in \S 5, Chapter 1 of \cite{Cetc}. The case
$r=2$ has been studied by Gastel and Kronz \cite{GK} using somewhat
different methods (related to those of C. B\"ohm \cite{B} in the
Einstein case (cf Remark \ref{Bohm})).

As is true for most known examples, the above solitons are of
{\em gradient type}, that is, we have $X = {\rm grad} \; u$ for a smooth
function $u$. Equation (\ref{LieRS}) then becomes
\begin{equation} \label{gradRS}
{\ric}(g) + {\rm Hess}(u) + \frac{\epsilon}{2} \, g = 0.
\end{equation}
As in the steady case, we produce our examples by analysing a reduction
of this equation to a system of ordinary differential equations.
The main difference between the steady and expanding cases is that the
Lyapunov function in the steady situation is no longer one in the
 expanding case.  Consequently, additional careful analysis of the behaviour
of the solution trajectories at the infinite end is necessary. In
particular, we need to use the theory of centre manifolds in studying their
$\omega$-limit sets.

Finally we note that there are numerous homogeneous expanding solitons
of non-gradient type on nilpotent Lie groups, as a result of the work of
Lauret \cite{La} and others. Our examples, however, will in general be inhomogeneous.

\section{\bf Multiple warped products}
We consider multiple warped products, that is, metrics of the form
\begin{equation} \label{metric}
dt^2 + \sum_{i=1}^{r} g_i^2(t)\,  h_i
\end{equation}
on $I \times M_1 \times ... \times M_r$\, where
$I$ is an interval in $\mathbb R$ and $(M_i, h_i)$ are Einstein manifolds
with positive Einstein constants $\lambda_i$. We let $d_i$ denote the (real)
dimension of $M_i$ and $n= d_1 + \dots + d_r$.

Moreover, the soliton potential $u$ is taken to be a function of $t$ alone.

The shape operator and Ricci endomorphism on the hypersurfaces of constant
$t$ are now given by
\begin{eqnarray*}
L_t &=& {\rm diag} \left( \frac{\dot{g_1}}{g_1} \,\I_{d_1}, \cdots,\frac{\dot{g_r}}{g_r} \,\I_{d_r}\right)  \\
{\rm Ric}^{\sharp}_t &=& {\rm diag} \left( \frac{\lambda_1}{g_1^2} \, \I_{d_1}, \cdots,\frac{\lambda_r}{g_r^2}\,
 \I_{d_r}\right)
\end{eqnarray*}
where $\I_m$ denotes the identity matrix of size $m$.

Motivated by \cite{Cetc}, we introduce new variables
\begin{eqnarray}
X_i &=& \frac{\sqrt{d_i}}{( -\dot{u} + {\tr} L)} \frac{\dot{g_i}}{g_i} \label{def-Xi},\\
Y_i &=& \frac{\sqrt{d_i \lambda_i}}{g_i} \frac{1}{(-\dot{u} + \tr{L})} \label{def-Yi},
\end{eqnarray}
for $i=1, \ldots, r$, and
\begin{equation}  \label{def-W}
W = \frac{1}{-\dot{u} + {\tr L}}.
\end{equation}

It is also convenient to introduce a new independent variable $s$ defined by
\begin{equation} \label{st}
\frac{d}{ds} := W \frac{d}{dt} =
\frac{1}{(-\dot{u} + {\rm tr} L)} \frac{d}{dt}.
\end{equation}
We use a prime ${ }^{\prime}$ to denote differentiation with respect to $s$.

In these variables the Ricci soliton system becomes the following equations
\begin{eqnarray}
X_{i}^{\prime} &=& X_i \left( \sum_{j=1}^{r} X_j^2 -1 \right) +
\frac{Y_i^2}{\sqrt{d_i}} + \frac{\epsilon}{2} \left( \sqrt{d_i}-X_i \right)
W^2 \, , \label{eqnX} \\
Y_{i}^{\prime} &=&  Y_i \left( \sum_{j=1}^{r} X_j^2 -\frac{X_i}{\sqrt{d_i}}
-\frac{\epsilon}{2}W^2 \right) \, , \label{eqnY} \\
W^{\prime} &=& W \left( \sum_{j=1}^{r} X_j^2 - \frac{\epsilon}{2}W^2  \right)
\label{eqnW}
\end{eqnarray}
for $i=1, \ldots, r$. The constant $\epsilon$ above is taken to be positive
as we are dealing with the case of expanding solitons. Note that the system
is invariant under the transformations $Y_i \mapsto -Y_i$ or $W \mapsto -W$.

\medskip
Conversely, if we have a solution of the above system, we may recover $t$ and the $g_i$
from
\begin{equation} \label{def-tgi}
dt = W \,\, ds, \ \ \ \ \
{g}_i = W \frac{\sqrt{d_i \lambda_i}}{Y_i}\,
\end{equation}
 The soliton potential $u$ is recovered from integrating
\begin{equation} \label{def-u}
 \dot{u} = \tr(L) - \frac{1}{W},
\end{equation}
where $\tr(L)$ is calculated using
\begin{equation} \label{logdiff-gi}
 \frac{\dot{g_i}}{g_i} = W^{-1} \frac{X_i}{\sqrt{d_i}}\, .
\end{equation}

As in the steady case the equations admit a conservation law
\begin{equation} \label{cons}
\Ly - \epsilon \left( u - \frac{(n-1)}{2} \right) W^2 = C W^2
\end{equation}
where $C$ is a constant and $\Ly = \sum_{j=1}^{r} (X_j^2 + Y_j^2) -1$.
In fact we showed in \cite{DW1} (cf Remark 2.8) and \S 1 of \cite{DW2}
that this is a general feature of gradient Ricci soliton equations of
cohomogeneity one type.

\section{\bf Trajectories of the equations}
A non-Einstein expanding soliton cannot be compact \cite{Per}, so, as
in the steady case \cite{DW2}, we shall construct complete non-compact
soliton metrics where there is a smooth collapse at one end onto a
lower-dimensional submanifold. This can be achieved if we take one
factor, say $M_1$, to be a sphere $S^{d_1}$, so the submanifold is
then  $M_2 \times \cdots \times M_r$. We can take the end to
correspond to $t=0$ without any loss of generality, With the
normalization $\lambda_1 = d_1 -1$, the boundary conditions for the
soliton solution to be $C^2$ are the existence of the following
limits:
\begin{equation} \label{bdy0}
g_1(0)=0   \;\; : \;\; g_i(0) = l_i \neq 0 \; (i > 1),
\end{equation}

\begin{equation} \label{bdy1}
\dot{g_1}(0)=1 \;\; : \;\; \dot{g_i}(0) = 0 \; (i > 1),
\end{equation}

\begin{equation} \label{bdy2}
\ddot{g}_{1}(0) =0 \;\; : \;\; \ddot{g_i}(0) \;\, {\rm  finite} \;
(i > 1),
\end{equation}

\begin{equation} \label{bdyu}
u(0), \, \, \dot{u}(0)=0, \, \, \ddot{u}(0) \, \, {\rm \, finite}.
\end{equation}

The regularity arguments of \cite{DW2} show that, if in addition the
third derivatives of $g_i$ tend to finite limits at $t=0$, the soliton
is in fact smooth.

With the above remarks in mind, we consider trajectories emanating from the
critical point of (\ref{eqnX})-(\ref{eqnW}) given by
\begin{equation} \label{critpoint}
W=0, \, X_1 = \beta, \;\; Y_1 = \hat{\beta}, \;\; X_i =Y_i=0  \,\, (i > 1)
\end{equation}
where $\beta = \frac{1}{\sqrt{d_1}} $ and $\hat{\beta} = +\sqrt{1- \beta^2}$.
This critical point lies on the unit sphere in $XY$-space.

\begin{rmk} \label{eqpt}
One can show that in the expanding case the only critical points of the
flow that do {\em not} lie on the unit sphere in $XY$-space with $W=0$
are

$($i$)$ the origin $O:=(W,X,Y)=(0,0,0),$

$($ii$)$ the points $E_{\pm}:=(W,X_i,Y_i) = (\pm\sqrt{\frac{2}{n\epsilon}},
   \frac{\sqrt{d_i}}{n}, 0)$ where $n = \sum_{j=1}^{r} d_j$.
\end{rmk}

We return now to the critical point (\ref{critpoint}).
Linearising about this point gives a system whose matrix
has a $2 \times 2$ block
\[
\left( \begin{array}{cc}
3 \beta^2 -1 & 2 \beta \hat{\beta} \\
\beta \hat{\beta} & 0
\end{array} \right)
\]
corresponding to $X_1, Y_1$ : the remaining entries are diagonal, with
$\beta^2$ and $\beta^2 -1$ occurring $r$ and $r-1$ times respectively.
(The extra $\beta^2$ term comes from the diagonal $W$ entry in the matrix.)

The eigenvalues are therefore
$\beta^2$ ($r$ times),  $\beta^2 -1$ ($r$ times), and $2 \beta^2$. When
$d_1 > 1$ the critical point is hyperbolic. We will assume this for the remainder
of the paper.

\medskip
We will parametrise trajectories emanating from this critical point
so that the critical point corresponds to $s=-\infty$.

We see
there is an $r$-parameter family of trajectories lying in the unstable
manifold of this critical point and not lying in the unit sphere in $XY$-space.

The equation (\ref{eqnY}) for $Y_i$ shows that $Y_i$ is never zero
unless it is identically zero. We note for future reference that the
sets $\{Y_i = 0 \}$ are closed invariant subsets of the flow.
Also the system (\ref{eqnX})-(\ref{eqnW}) and our initial conditions
are invariant under the symmetries $Y_i \mapsto -Y_i$ for $i > 1$.
Hence we may take $Y_i$ to be everywhere positive for $i>1$. Since
$\lim_{s \rightarrow -\infty} Y_1 = \hat{\beta} > 0$ we may
assume that $Y_1 > 0$  as well.

Similarly, the equation for $W$ shows that $W$ is either never zero or
identically zero. (Again, this means that $\{W =0\}$ is a flow invariant
subset.) Now the conservation law (\ref{cons}) shows the latter
cannot happen unless the trajectory is contained in the unit sphere
$\sum_{j=1}^{r} (X_j^2 + Y_j^2) =1$ and we are back in the Ricci flat case.
Having chosen $W$ to be nonzero, the symmetry of our equations under
$W \mapsto -W$ implies that we can take $W$ positive for all time.

\begin{lemma} \label{Xpos}
The variables $X_i$ are positive for all finite values of $s$.
\end{lemma}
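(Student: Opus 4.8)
The plan is to run a barrier (flow-invariance) argument keyed to the sign of the right-hand side of (\ref{eqnX}) on the wall $\{X_i = 0\}$. The crucial observation is that wherever $X_i = 0$, equation (\ref{eqnX}) collapses to
\[
X_i' = \frac{Y_i^2}{\sqrt{d_i}} + \frac{\epsilon}{2}\,\sqrt{d_i}\,W^2 ,
\]
which is strictly positive because we have already arranged $Y_i > 0$ and $W > 0$, and $\epsilon > 0$. Hence $X_i$ can only meet the value $0$ in the strictly increasing direction: zeros of $X_i$ are isolated, at each one the function passes from negative to positive, and consequently $X_i$ has at most one zero and stays positive for all larger finite $s$ once it is positive. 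The whole lemma thus reduces to showing that each $X_i$ is positive for $s$ near $-\infty$.

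For $i=1$ this is immediate. Since $\lim_{s\to-\infty}X_1 = \beta > 0$, a finite zero $s_0$ of $X_1$ would force $X_1 < 0$ just to the left of $s_0$, hence a second, downward, zero at some $s_1 < s_0$ where $X_1'(s_1)\le 0$, contradicting the strict positivity of $X_1'$ at any zero. Therefore $X_1 > 0$ throughout.

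The delicate case, and the main obstacle, is $i > 1$, where $\lim_{s\to-\infty}X_i = 0$ and the barrier argument alone cannot decide on which side of $0$ the trajectory emerges. Here I would use the linearisation computed above: the $X_i$-direction $(i>1)$ is stable, with eigenvalue $\beta^2 - 1 < 0$, while $W$ and the $Y_i$ $(i>1)$ lie in the unstable directions with eigenvalue $\beta^2$, so near $s=-\infty$ one has $W, Y_i \sim e^{\beta^2 s}$ and the forcing term $f(s) := \frac{Y_i^2}{\sqrt{d_i}} + \frac{\epsilon}{2}\sqrt{d_i}\,W^2$ behaves like $c\,e^{2\beta^2 s}$ with $c>0$. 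Writing (\ref{eqnX}) as $X_i' = (\beta^2 - 1)X_i + f(s) + (\text{quadratic corrections})$ and noting that the homogeneous mode $e^{(\beta^2-1)s}$ blows up as $s\to-\infty$, the unique solution decaying to $0$ at the critical point is the particular solution
\[
X_i(s) = \int_{-\infty}^{s} e^{(\beta^2-1)(s-\tau)}\, f(\tau)\, d\tau \; > \; 0 .
\]
This exhibits $X_i$ as positive for $s$ sufficiently negative, after which the barrier argument propagates positivity to all finite $s$.

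The step I expect to need the most care is this last local estimate near $s=-\infty$: one must justify the asymptotics $W, Y_i \sim e^{\beta^2 s}$ on the emanating trajectories and control the quadratic corrections to the $X_i$-equation tightly enough that they do not overturn the sign of the forcing integral. This is precisely where the unstable-manifold description of the trajectories leaving (\ref{critpoint}) enters, and it constitutes the analytic heart of the proof; the remaining bookkeeping for $i=1$ and the propagation to finite $s$ are routine once positivity at the infinite end is secured.
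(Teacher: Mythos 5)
Your first two steps are fine: on the wall $\{X_i=0\}$, equation (\ref{eqnX}) gives $X_i'=\frac{Y_i^2}{\sqrt{d_i}}+\frac{\epsilon}{2}\sqrt{d_i}\,W^2>0$ (using $Y_i,W>0$), so zeros of $X_i$ can only be crossed upward and positivity near $s=-\infty$ propagates to all finite $s$; and for $i=1$ positivity near $-\infty$ follows from $X_1\to\beta>0$. The genuine gap is exactly the step you flag yourself: for $i>1$ you never establish positivity near $-\infty$. The Duhamel formula you write down solves the frozen, constant-coefficient linearisation $X_i'=(\beta^2-1)X_i+f$, not the actual equation; identifying the trajectory's $X_i$ with that particular solution requires precisely the control of the ``quadratic corrections'' and of the claimed asymptotics $W,Y_i\sim e^{\beta^2 s}$ that you defer to future work. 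Since $i>1$ is the only case in which the lemma has real content, the proposal as written is a programme rather than a proof.

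Moreover, the machinery you reach for is unnecessary, for two separate reasons. First, (\ref{eqnX}) is \emph{exactly} linear in $X_i$: it reads $X_i'=a\,X_i+g$ with $a=\sum_{j}X_j^2-1-\frac{\epsilon}{2}W^2$ and $g=\frac{Y_i^2}{\sqrt{d_i}}+\frac{\epsilon}{2}\sqrt{d_i}\,W^2$, so there are no correction terms to control. Since $a\to\beta^2-1<0$, $g>0$, and $X_i$ is bounded near $-\infty$, exact variation of constants gives, for all $s\le s^*$ (where $a$ is negative),
\[
X_i(s)=\int_{-\infty}^{s}e^{\int_{\tau}^{s}a(\sigma)\,d\sigma}\,g(\tau)\,d\tau>0,
\]
with no information needed about the rate at which $W$ and $Y_i$ vanish. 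Second---and this is the paper's route---one can avoid integral representations entirely by exploiting the sign of the right-hand side of (\ref{eqnX}) on the whole region $\{X_i\le0\}$, not just on the wall: for $s\le s^*$ the coefficient $a$ is negative, so wherever $X_i\le0$ we have $a\,X_i\ge0$ and hence $X_i'>0$ strictly. Thus if $X_i(s_0)\le0$ at some $s_0\le s^*$, these inequalities persist on all of $(-\infty,s_0]$, so $X_i$ is negative, strictly increasing, and therefore bounded away from zero as $s\to-\infty$; this contradicts $X_i\to0$ (for $i>1$) just as well as it contradicts $X_1\to\beta$. This backwards monotonicity argument settles every $i$ uniformly, and the observation that the sign of $X_i'$ is controlled on the full half-space $\{X_i\le 0\}$ once the coefficient of $X_i$ is negative is exactly what your proposal is missing.
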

\begin{proof}
  In Eqn (\ref{eqnX}) the coefficient of $X_i$ is negative
  for large negative $s$, say for $s \leq s^*$. (In fact, it tends to
  $\beta^2 -1 = \frac{1}{d_1} -1.$)
  If $X_i$ is non-positive at some $s_0 \leq s^*$, the equation
  (\ref{eqnX}) shows
  that $X_i^{\prime}(s_0)$ is positive, hence these inequalities hold
  on $(-\infty, s_0]$ and $X_i$ is negative and bounded away from zero
  as $s$ tends to $-\infty$, a contradiction to our choice of
  critical point.

Hence $X_i$ is positive on $(-\infty, s^*]$. But (\ref{eqnX})
shows that $X_i^{\prime}$ is positive if $X_i$ vanishes, so in fact
$X_i$ stays positive for all time.
\end{proof}

The following equations are easily deduced from (\ref{eqnX})-(\ref{eqnW}):
\begin{lemma}
We have the equations:
\begin{equation} \label{WY}
\left(\frac{W}{Y_i} \right)^{\prime} = \frac{X_i}{\sqrt{d_i}}
\left(\frac{W}{Y_i} \right),
\end{equation}
\begin{equation} \label{XY}
\left( \frac{X_i}{Y_i^2} \right)^{\prime} = \frac{X_i}{Y_i^{2}} \left(
-1 - \sum_{j=1}^{r} X_j^2 + \frac{2 X_i}{\sqrt{d_i}} + \frac{\epsilon}{2}W^2
 \right) + \frac{1}{\sqrt{d_i}} + \frac{\epsilon \sqrt{d_i}}{2}
\frac{W^2}{Y_i^2}.
\end{equation}
\end{lemma}

Eqn.(\ref{WY}) now gives the following result.
\begin{cor} \label{WYlim}
For each $i$, the function $\frac{W}{Y_i}$ is monotonic increasing: so
there exist $\mu_i \in [0, \infty)$ and $\sigma_i \in (0, \infty]$
such that $\lim_{s \rightarrow -\infty} \frac{W}{Y_i} = \mu_i$ and
$\lim_{s \rightarrow \infty} \frac{W}{Y_i} = \sigma_i$.    \qed
\end{cor}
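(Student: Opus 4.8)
The plan is to read everything off directly from the first of the two deduced equations, namely (\ref{WY}), since the hard analytic work has already been done in pinning down the signs of the variables. First I would record that, by the discussion preceding Lemma \ref{Xpos}, we have arranged $W > 0$ and $Y_i > 0$ for all $i$ and all finite $s$ along our chosen trajectory; consequently the quotient $\frac{W}{Y_i}$ is a well-defined, strictly positive function of $s$. By Lemma \ref{Xpos} we moreover know that $X_i > 0$ for all finite $s$.

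With these signs in hand the monotonicity is immediate. The right-hand side of (\ref{WY}), namely $\frac{X_i}{\sqrt{d_i}}\left(\frac{W}{Y_i}\right)$, is a product of strictly positive quantities, so $\left(\frac{W}{Y_i}\right)^{\prime} > 0$ and hence $\frac{W}{Y_i}$ is strictly increasing in $s$.

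The existence of the two limits then follows from the monotone convergence principle for real functions, and I would only need to pin down the ranges. As $s \to -\infty$ the increasing function $\frac{W}{Y_i}$ decreases to its infimum; since it is bounded below by $0$ and bounded above by its value at any fixed $s_0$, this infimum is a finite nonnegative number, giving $\mu_i \in [0,\infty)$. As $s \to +\infty$ the function increases to its supremum, which is bounded below by the strictly positive value at any fixed $s_0$ but need not be finite, giving $\sigma_i \in (0,\infty]$.

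I do not anticipate a genuine obstacle here: the content of the corollary is entirely contained in the positivity statements already established, and the only point demanding a moment's care is the bookkeeping on the endpoints of the two intervals. In particular one should note that the left-hand limit $\mu_i$ may legitimately be $0$ (indeed for $i=1$ it is, since $W \to 0$ while $Y_1 \to \hat{\beta} > 0$ at the critical point), whereas the right-hand limit $\sigma_i$ cannot be $0$ but may equal $+\infty$.
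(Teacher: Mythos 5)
Your proposal is correct and follows exactly the paper's route: the authors derive the corollary directly from equation (\ref{WY}) together with the previously established positivity of $W$, $Y_i$, and $X_i$ (Lemma \ref{Xpos}), which is precisely your argument. The bookkeeping on the endpoint cases ($\mu_i$ possibly $0$, $\sigma_i$ possibly $+\infty$) matches the statement as well.
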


\begin{rmk} Note that $\mu_i = \lim_{t \rightarrow 0} \frac{g_i}{\sqrt{d_i \lambda_i}} $
by (\ref{def-Yi}) and we shall show in Prop \ref{gifinite} that $\mu_i > 0$
if $i > 1$. Of course $\mu_1 = 0$.
\end{rmk}

\section{\bf Long-time behaviour of the flow}

We recall from \cite{DW2} the following quantities (the first of which
has been referred to already in the discussion of the conservation law at the end of \S 1):
\begin{equation}
{\mathcal L} = \sum_{i=1}^{r} (X_i^2 + Y_i^2 ) -1,
\end{equation}
\begin{equation}
{\mathcal H} = \sum_{i=1}^{r} \sqrt{d_i} X_i.
\end{equation}
It is also convenient to consider
\begin{equation}
\Qa = \Ly + \frac{\epsilon}{2}(n-1) W^2.
\end{equation}
Note that, in view of our initial conditions, we have $\Ly, \Qa \rightarrow 0$
and $\Ha \rightarrow 1$ as $s \rightarrow -\infty$.

Recall that in the steady ($\epsilon=0$) case, $\Ly$ was a Lyapunov function,
since it had the same sign as its derivative. In the current expanding case
this need no longer be true.

In fact we can calculate that our quantities satisfy the equations
\begin{equation} \label{eqnL}
{\mathcal L}^{\prime} = 2 {\mathcal L} \left( \sum_{i=1}^{r} X_i^2
- \frac{\epsilon}{2} W^2 \right) + \epsilon W^2 \left({\mathcal H} -1 \right),
\end{equation}
\begin{equation} \label{eqnH}
\left({\mathcal H} -1 \right)^{\prime} =
\left( \sum_{j=1}^{r} X_j^2 -1 -\frac{\epsilon}{2} W^2 \right)
({\mathcal H}-1 ) + \Qa,
\end{equation}
\begin{equation} \label{eqnQ}
\Qa^{\prime} =\epsilon W^2 \left( {\mathcal H} -1 \right) +
2 \left(\sum_{j=1}^{r} X_j^2 - \frac{\epsilon}{2} W^2 \right) \Qa.
\end{equation}

We have, therefore, equations of the form
\begin{equation}\label{eqnHQ1}
(\Ha-1)^{\prime} = f_1 (\Ha-1) + f_2 \Qa
\end{equation}
\begin{equation} \label{eqnHQ2}
\Qa^{\prime}     = f_3 (\Ha-1) + f_4 \Qa,
\end{equation}
where (for finite values of $s$) the $f_i$ are real analytic
and $f_2$ and $f_3$ are positive.

Repeated differentiation of this system shows that if $\Ha-1$ and
$\Qa$ both vanish at some $s_0,$ then they vanish there to all orders
and hence are identically zero.  These equations
further imply that if $\Ha = 1$ and $\Qa < 0$ at $s_0$ then
$\Ha^{\prime}(s_0) <0$, while if $\Qa=0$ and $\Ha < 1$ at $s_0$,
then $\Qa^{\prime} (s_0) < 0$.

\begin{prop} \label{prepbounds}
If $\Ha < 1$ and $\Qa < 0$ for large negative $s$, then these inequalities
hold for all finite $s$.
\end{prop}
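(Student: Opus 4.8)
The plan is to read this as an invariant-region statement for the planar subsystem (\ref{eqnHQ1})--(\ref{eqnHQ2}), exploiting the sign information already extracted just before the Proposition. The target region is the open set $R = \{\Ha < 1,\ \Qa < 0\}$, whose boundary splits into the edge $\{\Ha = 1,\ \Qa < 0\}$, the edge $\{\Qa = 0,\ \Ha < 1\}$, and the corner $\{\Ha = 1,\ \Qa = 0\}$. By hypothesis the trajectory lies in $R$ for all $s \le s_*$, for some $s_*$. I would argue by contradiction: assuming the conclusion fails, set $s_1 = \inf\{ s > s_* : (\Ha(s),\Qa(s)) \notin R\}$, which is finite and satisfies $s_1 > s_*$. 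By continuity $(\Ha(s_1),\Qa(s_1))$ lies on $\partial R$, while both strict inequalities hold throughout $[s_*, s_1)$.

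The next step is to rule out each boundary possibility in turn. On the first edge, where $\Ha(s_1) = 1$ and $\Qa(s_1) < 0$, the sign computation recorded immediately before the Proposition gives $\Ha^{\prime}(s_1) < 0$ (equivalently $(\Ha-1)^{\prime}(s_1) = f_2 \Qa(s_1) < 0$, since $f_2 > 0$). But $\Ha - 1$ is negative on $[s_*, s_1)$ and vanishes at $s_1$, which forces its left derivative there to be nonnegative, a contradiction. The second edge, where $\Qa(s_1) = 0$ and $\Ha(s_1) < 1$, is excluded identically: there $\Qa^{\prime}(s_1) = f_3(\Ha(s_1) - 1) < 0$ (since $f_3 > 0$), again incompatible with $\Qa$ rising to $0$ from below. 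Thus on both edges the flow is strictly transverse and points back into $R$.

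The remaining case is the corner $\Ha(s_1) = 1$, $\Qa(s_1) = 0$, where the first-order sign arguments degenerate, and this is the one point where I expect any genuine subtlety to lie. Here I would invoke the rigidity already noted: since $\Ha - 1$ and $\Qa$ vanish simultaneously at $s_1$, repeated differentiation of (\ref{eqnHQ1})--(\ref{eqnHQ2}) shows they vanish there to all orders, so by real analyticity they are identically zero. This contradicts $\Ha - 1 < 0$ on $[s_*, s_1)$. Having eliminated every point of $\partial R$, no such exit time $s_1$ can exist, and therefore $\Ha < 1$ and $\Qa < 0$ persist for all finite $s$.
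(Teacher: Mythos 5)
Your proof is correct and takes essentially the same approach as the paper: the paper's proof also considers the first time $s_0$ at which one of the inequalities fails and disposes of the two edges and the corner using precisely the three facts recorded just before the Proposition (the sign of $\Ha^{\prime}$ when $\Ha=1,\ \Qa<0$; the sign of $\Qa^{\prime}$ when $\Qa=0,\ \Ha<1$; and the vanishing-to-all-orders rigidity at the corner). You have simply written out explicitly the left-derivative contradictions that the paper leaves implicit.
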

\begin{proof}
If the inequalities do not hold for all time, consider the first point
$s_0$ at which one fails. This leads to one of the three possibilities
above, all of which give a contradiction.
\end{proof}

Next we note that $\Qa < 0$ for all $s \leq s^*$ implies that $\Ha < 1$
for all $s\leq s^*$. Indeed if we substitute our conservation law (\ref{cons})
into the above equation for $\Ha^{\prime}$ and simplify, we obtain
$$ \Ha^{\prime} =(1-\Ha)\left(\sum_{j=1}^r \,
Y_i^2  + \frac{\epsilon}{2} \, nW^2 \right) +
       {\Ha}W^2(C + \epsilon u). $$
But in terms of $\Qa$, the conservation law is
$$ \Qa = W^2 (C + \epsilon u). $$
So if at some $s_0 \leq s^*$ we have $\Ha \geq 1$, then $\Ha$ would be monotone
decreasing to the left of $s_0$, contradicting $\lim_{s \rightarrow -\infty} \Ha = 1$.

\begin{prop} \label{HQbounds}
There is an $r$-parameter family of our trajectories
with $\Ha < 1$ and $\Qa < 0$ for all finite $s$.
\end{prop}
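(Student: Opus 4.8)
The plan is to reduce everything to the behaviour as $s\to-\infty$ and then read off the signs of $\Ha-1$ and $\Qa$ from the planar subsystem (\ref{eqnHQ1})--(\ref{eqnHQ2}). By Proposition \ref{prepbounds} it is enough to exhibit an $r$-parameter family on which $\Ha<1$ and $\Qa<0$ hold merely for all sufficiently large negative $s$; and by the conservation-law computation preceding the proposition, $\Qa<0$ on such an initial ray already forces $\Ha<1$ there. So I only need to control the sign of $\Qa$ near the rest point (\ref{critpoint}).

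First I would linearise (\ref{eqnHQ1})--(\ref{eqnHQ2}) as $s\to-\infty$. Along each of our trajectories $\sum_j X_j^2\to\beta^2$ and $W\to0$, so the coefficient matrix tends to the upper-triangular matrix $\left(\begin{smallmatrix}\beta^2-1 & 1\\ 0 & 2\beta^2\end{smallmatrix}\right)$, whose eigenvalues $\beta^2-1<0$ and $2\beta^2>0$ are distinct and whose unstable eigenvector is $(1,\beta^2+1)$ in $(\Ha-1,\Qa)$ coordinates. Since $(\Ha-1,\Qa)\to(0,0)$, a standard asymptotic-integration argument (Levinson type) forces the pair onto this eigendirection: there is a constant $Q_0$ with $\Qa\sim Q_0\,e^{2\beta^2 s}$ and $\Ha-1\sim(\beta^2+1)^{-1}Q_0\,e^{2\beta^2 s}$. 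Thus $\Ha-1$ and $\Qa$ carry the common sign of $Q_0$ for large negative $s$, and the whole problem collapses to producing an $r$-parameter family with $Q_0<0$.

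To control $Q_0$ I would work on the unstable manifold of (\ref{critpoint}), which is $(r+1)$-dimensional, with eigenvalue $\beta^2$ along the $W$- and $Y_i$-directions ($i>1$) and eigenvalue $2\beta^2$ along the direction $(X_1,Y_1)=(2\beta,\hat\beta)$; the coordinates $X_i$ ($i>1$) and the complementary $(X_1,Y_1)$-direction are slaved. Viewing $Q_0$ as a smooth function of the mode amplitudes, one checks that the amplitude $a$ of the $2\beta^2$-mode contributes $\sqrt{d_1}\cdot 2\beta\,a=2a$ to the leading coefficient of $\Ha-1$, hence (by the eigenvector relation) $2(\beta^2+1)a$ to $Q_0$, while the slow modes $W$ and $Y_i$ enter only at quadratic order. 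Consequently $\partial Q_0/\partial a\neq0$, so $\{Q_0<0\}$ is a nonempty open subset of the unstable manifold --- geometrically, those trajectories approaching the rest point from inside the unit sphere along $(2\beta,\hat\beta)$. Being open and $(r+1)$-dimensional, it descends modulo the flow to an $r$-parameter family; each such trajectory has $\Qa<0$, hence $\Ha<1$, for large negative $s$, and Proposition \ref{prepbounds} then propagates both inequalities to all finite $s$.

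I expect the main obstacle to be the second-order nature of $\Ha-1$ and $\Qa$. Because both vanish to first order in the slow modes, they live at the single exponential order $e^{2\beta^2 s}$, and their sign is a genuinely quadratic effect that in principle requires the full quadratic part of the unstable-manifold graph (the slaving of $X_1,Y_1$ by the $W$- and $Y_i$-modes). The clean way around this is to use that the coefficient $f_2=1$ in (\ref{eqnHQ1}) is exact, which ties $Q_0$ rigidly to the leading coefficient of $\Ha-1$; only the robust linear contribution $2a$ of the $2\beta^2$-eigenmode to $\Ha$ is then needed to conclude $\partial Q_0/\partial a\neq0$. One must also verify that the exponentially small nonautonomous corrections to (\ref{eqnHQ1})--(\ref{eqnHQ2}) do not rotate the asymptotic direction, which is guaranteed by the hyperbolicity and the gap between the limiting eigenvalues $\beta^2-1$ and $2\beta^2$.
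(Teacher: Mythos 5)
Your proposal is correct and follows essentially the same route as the paper: reduce via Proposition \ref{prepbounds} and the conservation-law remark to controlling the sign of $\Qa$ for large negative $s$, then select the $r$-parameter family according to the sign of the component along the $2\beta^2$-eigendirection of the unstable manifold at the critical point (\ref{critpoint}). The paper's own proof is a two-line version of exactly this; your Levinson-type asymptotics for the $(\Ha-1,\Qa)$ system and your explicit handling of the resonance $2\beta^2=\beta^2+\beta^2$ (quadratic slow-mode contributions competing at the same order $e^{2\beta^2 s}$ as the linear fast-mode term) simply make rigorous what the paper asserts by pointing at the eigenvector $(0,2\beta,0,\ldots,0,\hat\beta,0,\ldots,0)$.
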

\begin{proof} In view of Prop \ref{prepbounds} and the ensuing discussion,
it suffices to observe that there is an $r$-parameter family of initial
directions at our critical point (\ref{critpoint}) along which $\Qa$ is
strictly decreasing. These are given by tangent vectors to the unstable
manifold which have a negative inner product with the eigenvector
$(0, 2\beta, 0, \cdots, 0, \hat{\beta}, 0, \cdots, 0)$ of the linearization
(associated to the eigenvalue $2\beta^2$).
\end{proof}

\begin{cor} \label{bounds}    We have
\begin{enumerate}
\item $\Ly < 0$ for all finite $s$, so the trajectory
stays in the  region $\sum_{j=1}^{r} (X_j^2 + Y_j^2) <1$.

\item $W < \sqrt{\frac{2}{\epsilon}}$ for all finite $s$.

\item All variables $W, X_i, Y_i$ are bounded so the flow exists
for $s \in (-\infty, \infty)$.
\end{enumerate}
\end{cor}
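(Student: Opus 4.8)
The plan is to deduce all three statements from the bounds $\Ha < 1$ and $\Qa < 0$ established for our $r$-parameter family in Proposition \ref{HQbounds}, together with the defining equation (\ref{eqnW}) for $W$.

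For (1), I would simply rearrange the definition $\Qa = \Ly + \frac{\epsilon}{2}(n-1)W^2$. Since $\epsilon > 0$, $n > 1$ (as $d_1 > 1$), and $W$ is everywhere positive, the term $\frac{\epsilon}{2}(n-1)W^2$ is strictly positive, so $\Ly = \Qa - \frac{\epsilon}{2}(n-1)W^2 < \Qa < 0$. This gives $\Ly < 0$ at once, hence $\sum_{j=1}^r (X_j^2 + Y_j^2) < 1$ along the trajectory. In particular this forces $\sum_{j=1}^r X_j^2 < 1$, which is the key input for the next part.

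For (2), I would run a first-crossing argument on $W$ using (\ref{eqnW}). Recall that $W$ is everywhere positive and $W \to 0$ as $s \to -\infty$. Suppose $W$ attained the value $\sqrt{2/\epsilon}$ at some finite $s$, and let $s_0$ be the first such value; then $W(s) < \sqrt{2/\epsilon}$ for $s < s_0$, so $W'(s_0) \geq 0$ by continuity. But at $s_0$ we have $\frac{\epsilon}{2}W^2 = 1 > \sum_{j=1}^r X_j^2$ by part (1), whence (\ref{eqnW}) gives $W'(s_0) = W(s_0)\left(\sum_{j=1}^r X_j^2 - \frac{\epsilon}{2}W^2\right) < 0$, a contradiction. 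Hence $W < \sqrt{2/\epsilon}$ for all finite $s$.

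For (3), boundedness is now immediate: parts (1) and (2) confine $(W, X_i, Y_i)$ to the compact region where $\sum_j (X_j^2 + Y_j^2) \le 1$ and $0 \le W \le \sqrt{2/\epsilon}$. Since the right-hand sides of (\ref{eqnX})--(\ref{eqnW}) are polynomial, hence locally Lipschitz, a trajectory confined to a compact set cannot escape to infinity in finite parameter-time; the standard escape-time criterion then shows the flow is defined for all $s \in (-\infty, \infty)$. I expect (2) to be the only step requiring any care, since it is where the dynamics of $W$ actually enter, whereas parts (1) and (3) are essentially bookkeeping once Proposition \ref{HQbounds} is in hand. The one point to watch in (2) is that the first-crossing point $s_0$ is genuinely finite and approached from below, which is guaranteed by the continuity of $W$ together with its vanishing limit at $-\infty$.
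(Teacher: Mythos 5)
Your proposal is correct and takes essentially the same route as the paper: part (1) from $\Ly < \Qa < 0$, part (2) by a barrier argument at the threshold $\sqrt{2/\epsilon}$ combining (\ref{eqnW}), the bound $\sum_{j=1}^{r} X_j^2 < 1$, and $\lim_{s \to -\infty} W = 0$, and part (3) by compact confinement plus the standard escape-time criterion. The only cosmetic difference is that the paper phrases (2) as backward monotonicity (if $W \geq \sqrt{2/\epsilon}$ at $s_0$ then $W' < 0$ there, so $W$ stays above the threshold to the left of $s_0$) rather than as a first-crossing argument, but the contradiction with the vanishing limit at $-\infty$ is identical.
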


\begin{proof}
Now (1) follows from Prop \ref{HQbounds} since $\Ly < \Qa.$ If
$W \geq \sqrt{\frac{2}{\epsilon}}$ at some $s_0$,
then Eqn.(\ref{eqnW}) implies that $W^{\prime}(s_0) < 0$. Hence $W$ is
monotone decreasing to the left of $s_0$, contradicting
$\lim_{s \rightarrow -\infty} W = 0.$ This proves (2), and (3) follows
immediately. We note that a better upper bound for $X_i$ follows from
using $\Ha < 1$.
\end{proof}

\begin{rmk} \label{einstein}
It follows from equations (\ref{eqnHQ1})-(\ref{eqnHQ2})
 that the $2r-1$-dimensional submanifold
${\mathcal S}$ of phase space defined by
$\Ha =1, \Qa =0$ is invariant under the flow. As
\[
u^{\prime} = W \dot{u} = W \,{\rm tr}( L)  -1 = \sum_{i=1}^{r} \sqrt{d_i} X_i -1 =
\Ha -1
\]
we see that trajectories in $\mathcal S$ give Ricci solitons with constant potential $u$,
that is, Einstein metrics.
\end{rmk}

\medskip
We now return to the trajectories of Proposition \ref{HQbounds}, and in
particular to the analysis of their long-time behaviour. Let $\gamma$ denote
such a trajectory (with $W,  Y_i$ positive).

Recall that the $\omega$-limit set of the trajectory $\gamma$ is the set
\[
\Omega = \{ (W^*,X^*,Y^*) : \exists \, s_n
\rightarrow +\infty \; {\rm  with} \;
\gamma(s_n) = (W(s_n),X(s_n),Y(s_n)) \rightarrow (W^*,X^*,Y^*) \}.
\]
As our trajectory actually lies in a compact set, we know
(\cite{Pk} \S 3.2) that $\Omega$ is a compact, connected, non-empty set
that is invariant under the flow of our equations (both forwards
and backwards). Moreover,
by Cor. \ref{WYlim}, $\Omega$ is contained in the set
\[
\left\{ \frac{W}{Y_i}  = \sigma_i, \, \,  \; i=1,\ldots, r \right\}.
\]
If $\sigma_i=+\infty$, this condition is equivalent (since $W$ is
bounded) to $Y_i=0$. If $\sigma_i$ is finite, then we claim that
$\Omega$ still lies in $Y_i = 0$. To see this, note first that  flow-invariance
and (\ref{WY}) show that $\Omega$ is contained in $X_i=0$. Now
flow-invariance and Eqn.(\ref{eqnX}) show we must have
$Y_i=W=0$. We conclude that
\[
\Omega \subset \bigcap_{i=1}^{r} \, \{ (W,X,Y) :  Y_i=0 \},
\]
which is also a flow invariant set. As well, by Proposition \ref{HQbounds},
Lemma \ref{Xpos}, and Corollary \ref{bounds}, we also have
\begin{equation} \label{flowbox}
 \Omega \subset \left\{ 0 \leq X_i \ (1 \leq i \leq r), \,
 0 \leq W \leq \sqrt{\frac{2}{\epsilon}}, \ \
 \Ha \leq 1,  \ \ \Qa \leq 0,  \right\}.
\end{equation}

\begin{rmk} We shall see in Lemma \ref{limsigma} that $\sigma_i$
is always infinite.
\end{rmk}

We will now show that $\Omega$ contains the origin, and from this, with some more work,
deduce the convergence of the trajectory $\gamma$ to the origin.
To this end we will consider the flow restricted to $\Omega$.

\medskip
\begin{prop} \label{Omegazero}
If $\Omega$ contains a point with zero $W$-coordinate, then $\Omega$
 contains the origin. 
\end{prop}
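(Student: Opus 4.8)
The plan is to restrict attention to the flow-invariant set in which $p$ lives and exploit the drastic simplification of the system there. Let $p \in \Omega$ satisfy $W(p)=0$. Since $\Omega \subset \bigcap_i \{Y_i = 0\}$, we also have $Y_i(p)=0$ for all $i$, and because $\{W=0\}$ and each $\{Y_i=0\}$ are flow-invariant, the entire orbit of $p$ remains in $\{W=0,\ Y_i=0\ \forall i\}$; by invariance and closedness of $\Omega$, this orbit together with its limit points stays in $\Omega$. On this set the $W^2$-terms in (\ref{eqnX}) drop out and the system collapses to $X_i' = X_i\!\left(\sum_{j} X_j^2 - 1\right)$, with $W$ and all $Y_i$ frozen at $0$.

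Next I would track the single quantity $S := \sum_i X_i^2$ along this orbit. A direct computation gives $S' = 2S(S-1)$. On $\Omega$ we have $X_i \geq 0$ (Lemma \ref{Xpos} and (\ref{flowbox})) and $\Ly \leq 0$, hence $S \leq 1$; therefore $S' \leq 0$, so $S$ is non-increasing along the orbit.

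The main obstacle is the degenerate possibility $S(p)=1$: in that case $p$ is a fixed point of the reduced system and the argument stalls, since the orbit never moves and never reaches the origin. I would rule this out using the constraint $\Ha \leq 1$ on $\Omega$ recorded in (\ref{flowbox}). Indeed, each $d_i \geq 2$ (a positive Einstein constant forces $\dim M_i \geq 2$), so at any point with $X_i \geq 0$ and $\sum_i X_i^2 = 1$ one has $\Ha = \sum_i \sqrt{d_i}\,X_i \geq \sqrt{2}\sum_i X_i \geq \sqrt{2}\,\bigl(\sum_i X_i^2\bigr)^{1/2} = \sqrt{2} > 1$, where the middle inequality uses $\bigl(\sum_i X_i\bigr)^2 \geq \sum_i X_i^2$ for nonnegative $X_i$. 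This contradicts $\Ha \leq 1$ on $\Omega$, so necessarily $S(p) < 1$.

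Finally, with $0 \leq S(p) < 1$ and $S' = 2S(S-1) < 0$ whenever $0 < S < 1$, the function $S$ decreases monotonically along the forward orbit to a limit which must be a root of $S(S-1)$ strictly below $1$, namely $0$. Hence every $X_i \to 0$, and the forward orbit of $p$ converges to the origin $(W,X,Y)=(0,0,0)$. Since $\Omega$ is closed and contains this orbit, the origin lies in $\Omega$, as claimed.
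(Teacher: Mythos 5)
Your proof is correct, and it shares the same skeleton as the paper's: restrict to the flow-invariant set $\{W=0,\ Y_i=0\ \forall i\}$, observe $G:=\sum_i X_i^2$ satisfies $G'=2G(G-1)$ with $G\le 1$ from $\Qa\le 0$, and integrate to flow into the origin. Where you genuinely diverge is in the degenerate case $G(p)=1$, which is the hard part and occupies most of the paper's proof. The paper treats such a $p$ as a critical point on the unit sphere in $XY$-space and linearises the full system there: the nonzero eigenvalues ($2$, $1$, and $1-p_i/\sqrt{d_i}>0$) are all positive and the zero eigenspace is tangent to the critical submanifold, so the stable manifold is trivial, every non-constant trajectory leaves a neighbourhood of $p$ and cannot return, contradicting $p\in\Omega$. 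You instead rule out $G(p)=1$ \emph{a priori} by pure algebra: since each $d_i\ge 2$ (positive Einstein constant forces $\dim M_i\ge 2$, and $d_1>1$ is a standing assumption), at any point with $X_i\ge 0$ and $\sum_i X_i^2=1$ one gets $\Ha=\sum_i\sqrt{d_i}X_i\ge\sqrt{2}\,\sum_i X_i\ge\sqrt{2}\,\bigl(\sum_i X_i^2\bigr)^{1/2}=\sqrt{2}>1$, contradicting the constraint $\Ha\le 1$ built into (\ref{flowbox}). This is a clean and fully valid shortcut: it buys elementarity (no linearisation, no appeal to the structure of the critical submanifold) at the cost of leaning on the specific geometric inputs $\Ha\le 1$ and $d_i\ge 2$; the paper's dynamical argument is longer but establishes the stronger, constraint-independent fact that these unit-sphere critical points repel all non-constant trajectories, which is conceptually echoed later when the same critical points reappear in the Lyapunov analysis on $\hat{\mathcal S}$.
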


\begin{proof}
If $\Omega$ contains a point $p$ whose $W$-coordinate is zero,
let $\phi$ denote its trajectory. Then along it, we have $W=0$
and (from above) $Y_i =0$. Now
the $X_i$ evolve by
$$X_i^{\prime} = X_i \left(\sum_{k=1}^{r} X_k^2 -1 \right). $$
This implies that $G:=\sum_{k=1}^{r} X_k^2$ satisfies the equation
$$G^{\prime} =2G(G-1).$$
Moreover $G \leq 1$ since $\Qa \leq 0$. If $G \equiv 0$, then $p$ {\em is} the origin.
If $G$ is not identically $1$, then upon integrating the
equation we see that $\phi$ flows into the origin. Hence $\Omega$,
being closed and flow invariant, must contain the origin. If $G \equiv 1$,
then $p$ is a critical point of our vector field lying on the unit sphere
in $XY$ space (cf Remark \ref{eqpt}). We will show by examining the
linearisation of (\ref{eqnX})-(\ref{eqnW}) at $p$ that any trajectory
(which is not a critical point) must leave any small neighbourhood of $p$
in finite time and cannot return. This contradicts $p \in \Omega$.

Let $p=(0, p_1, \cdots, p_r, 0, \cdots, 0)$ with $\sum_{i=1}^r \, p_i^2 =1$.
Setting $X_i = x_i + p_i, \, Y_i = y_i$ and $W=w$, we have
\begin{eqnarray*}
x_i^{\prime} &=& 2p_i \sum_{j=1}^r \, p_j x_j + \cdots \\
y_i^{\prime} &=& (1-\frac{p_i}{\sqrt{d_i}})\, y_i + \cdots \\
w^{\prime} &=& w + \cdots
\end{eqnarray*}
where $\cdots$ denote terms of higher order. Hence the zero eigenspace
of the linearisation coincides with the tangent space of the critical
submanifold $\{W=0, Y_i = 0 \, (1 \leq i \leq r), \sum_i \, X_i^2 = 1 \}$
at $p$. The remaining eigenvalues of the linearisation are $2$ (with
eigenvector $p$), $1$ , and $1-\frac{p_i}{\sqrt{d_i}} > 0, (1 \leq i \leq r)$.
The stable manifold at $p$ therefore reduces to a point and the zero
eigenvalues correspond to directions tangent to the
submanifold of fixed points of the flow. This proves our claim.
\end{proof}

\medskip
In view of the preceding result, we now consider the
case when $\Omega$ does not contain any points
with $W$-coordinate equal to $0$. Since $\Omega$ is compact, the
function $W$ is bounded below by a positive constant $w_0$ on
$\Omega$.  We again consider the trajectory $\phi$ of a point $p \in
\Omega$ and denote by $\Omega_{\phi}$ its $\omega$-limit set.  Note
that $\Omega_{\phi} \subset \Omega$ (since $\Omega$ is closed and
flow-invariant) and $\Omega_{\phi}$ is also non-empty, compact,
connected and flow-invariant.  In analysing this set we may assume, by
(\ref{eqnX}) and the fact that $W$ is nonzero, that $X_i > 0, \, 1
\leq i \leq r,$ along $\phi$.

Letting $\tilde{X}_{i} = X_i/\sqrt{d_i},$  we have along $\phi$
\[
\left(\frac{\tilde{X_i}}{\tilde{X_j}} \right)^{\prime} =
\frac{\epsilon W^2}{2 \tilde{X}_{j}} \left( 1- \frac{\tilde{X}_i}{\tilde{X}_j} \right),
\]
where we have used the fact that $Y_i = 0, 1 \leq i \leq r$.
Hence $\frac{\tilde{X_i}}{\tilde{X_j}}$
 is either monotonic decreasing as $s \rightarrow +\infty$ to a limit
$\lambda_{ij} \geq 1$, or monotonic increasing to a limit $\lambda_{ij} \leq 1$.
(This includes the possibility that $\frac{\tilde{X_i}}{\tilde{X_j}}$ is
identically 1.) Letting $\tau_{ij} =
\frac{\sqrt{d_i}}{\sqrt{d_j}}\lambda_{ij}$, we see that (since $X_j$ are bounded)
\[
X_i - \tau_{ij} X_j = X_j \, \left( \frac{X_i}{X_j} - \tau_{ij} \right) \rightarrow 0
\;\;{\rm as} \;\; s \rightarrow +\infty.
\]
Hence $\Omega_{\phi}$ must lie in
the subspace
\[
\{ X_i - \tau_{ij} X_j =0, \;\; \ 1 \leq i,j \leq r \}
\]
for some constants $\tau_{ij}$ (depending on $\phi$).
As
\[
(X_i -\tau_{ij}X_j)^{\prime} = \left(X_i -\tau_{ij} X_j \right) \left(\sum_{k=1}^{r} X_k^2 -1
 -\frac{\epsilon}{2} W^2 \right) + \frac{\epsilon}{2} W^2 (\sqrt{d_i} -
\tau_{ij} \sqrt{d_j})
\]
and $ W>0$, we must have $\tau_{ij} = \sqrt{d_i}/\sqrt{d_j}$, i.e.,
 $\lambda_{ij}=1$ for all $i,j$.

Therefore, $\Omega_{\phi}$ is contained in a plane given by
$X_i = \frac{\sqrt{d_i} X_1}{\sqrt{d_1}},  \; i=2, \ldots, r$.
Parametrising this plane by $X_1, W$, we can write the flow as:
\begin{eqnarray}
X_1^{\prime} & =& X_1 \left( \frac{n}{d_1} X_1^2 -1 \right) +
\frac{\epsilon}{2} \left(\sqrt{d_1} - X_1 \right) W^2  \label{resX1} \\
W^{\prime} &=& W \left( \frac{n}{d_1} X_{1}^2  - \frac{\epsilon}{2} W^2 \right) \label{resW}.
\end{eqnarray}
Note that in this $2$-plane, we have $\Ha = \frac{n}{\sqrt{d_1}} X_1$, so
the condition $\Ha \leq 1$ becomes $X_1 \leq  \sqrt{d_1}/n$.

In view of (\ref{flowbox}) and the fact that on $\Omega$ we also
have $W \geq w_0$  for some positive constant $w_0$, we only need to focus
on the flow in the rectangular box in the $X_1W$-plane given by
$${\mathcal B} := \left\{0 \leq X_1 \leq \frac{\sqrt{d_1}}{n},  \,\,
w_0 \leq W \leq \sqrt{\frac{2}{\epsilon}}  \right\},$$
which contains $\Omega_{\phi}$.
The only critical point of (\ref{resX1})-(\ref{resW}) lying in $\mathcal B$ is
$(X_1, W) = (\frac{\sqrt{d_1}}{n}, \sqrt{\frac{2}{n\epsilon}})$. which
lies on the edge of $\mathcal B$. (This corresponds, of course, to the critical
point $E_{+}$ of the full system discussed in Remark \ref{eqpt}, which
lies in $\{ \Ha=1, \Qa=0 \}$.)

Observe that above the line $X_1 = \sqrt{\frac{\epsilon  d_1}{2n}}W$ in
this region we have $W$ strictly decreasing while below this line $W$
strictly increases. On the line, $W^{\prime}$ is zero and $X_1^{\prime}=
X_1 (-1 + \frac{nX}{\sqrt{d_1}})$ is negative. On $X_1=0$ we have $X_1^{\prime}$
positive, while on $X_1 = \sqrt{d_1}/n$ we have $X_1^{\prime}$ negative
below the critical point $E_+$ and positive above it. One can also check that
on the line $W=\sqrt{\frac{2}{\epsilon}}$ we have $X_1^{\prime}> 0$
and $W^{\prime} < 0$. On the line $W=w_0$,(for $w_0$ small),  $X_1^{\prime}$
 goes from positive
to negative. As $W$ increases from $w_0$ to $\sqrt{\frac{2}{n\epsilon}}$
the zero of $X_1^{\prime}$ moves to the right and tends to $\sqrt{d_1}/n$.

We deduce from the above picture that the only trajectory that starts
in $\Omega_{\phi} \subset {\mathcal B}$ and remains in $\mathcal B$
must lie above the line $X_1 = \sqrt{\frac{\epsilon  d_1}{2n}}W$ and must
flow into $E_+$. Indeed, if we linearise the flow in the plane about $E_+$, we find that
the eigenvalues of the linearised operator are
$\frac{1}{2} ( -1 \pm \sqrt{1 + (8/n)})$, so that $E_+$ is a hyperbolic critical point.
Since $\Omega_{\phi}$ is connected and flow-invariant,
 we conclude that $\Omega_{\phi}$ consists
 of a trajectory which ultimately coincides with the part of the stable
manifold (curve) at $E_+$ lying in $\mathcal B$. In particular, there must be
a sequence of points on our first trajectory $\gamma$ which lie arbitrarily
close to this portion of the stable curve.

Next recall that $\Omega$, and hence $\Omega_{\phi}$, are contained in $\{\Qa \leq 0\}$.
The intersection of the quadric $\Qa = 0$ and our $2$-plane is the ellipse
$\frac{n X_1^2}{d_1} + \frac{\epsilon}{2}(n-1) W^2 =1$.
$E_+$ lies on this ellipse and we may take $\nabla Q |_{E_+}= (\frac{2}{\sqrt{d_1}},
(n-1)\sqrt{\frac{2 \epsilon}{ n}})$ as a normal vector to the quadric at $E_+$.
 A tangent vector to the stable curve which points out of the region
 $\mathcal B$ is $\xi:=(\sqrt{\frac{2d_1 \epsilon}{n}}, \frac{n}{n-1}(\frac{1}{2}-
\frac{1}{2}\sqrt{1 + \frac{8}{n}} - \frac{2}{n}))$. One can now check that
$\nabla Q|_{E_+} \cdot \xi <0$. Hence the stable curve and the quadric $\Qa =0$
lie on opposite sides of the tangent space at $E_{+}$ to the quadric. This contradicts
the last statement of the previous paragraph unless $\Omega_{\phi} =\{ E_+ \}$ for
every trajectory $\phi$ in $\Omega$.

\medskip
We have shown:

\begin{prop} \label{Omeganonzero1}
Suppose that all points of $\Omega$ have nonzero $W$-coordinate. Then for each
point $p \in \Omega$, the $\omega$-limit set $\Omega_{\phi}$ of the trajectory $\phi$
through $p$ is the singleton set $\{ E_+ \}$.  \qed
\end{prop}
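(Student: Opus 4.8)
The plan is to study the flow directly on $\Omega$, exploiting two facts already established: $\Omega \subset \bigcap_i \{Y_i = 0\}$, and in the present case $W \geq w_0 > 0$ on $\Omega$. On this invariant set the $Y_i$ drop out and the dynamics of $\phi$ is governed by the $X_i$ and $W$ alone. My strategy is to first collapse $\Omega_\phi$ onto a two-dimensional slice, then run an explicit planar phase analysis, and finally invoke the constraint $\Qa \leq 0$ to pin the limit set down to a single point.

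First I would show that $\Omega_\phi$ lies in the plane $X_i = (\sqrt{d_i}/\sqrt{d_1})\,X_1$. Setting $\tilde X_i = X_i/\sqrt{d_i}$ and using Lemma \ref{Xpos} together with $W \neq 0$ to guarantee $X_i > 0$ along $\phi$, each ratio $\tilde X_i/\tilde X_j$ obeys a first-order equation whose right-hand side has a definite sign because $W > 0$; hence the ratio is monotone, and being bounded it converges to some $\lambda_{ij}$. This traps $\Omega_\phi$ in a plane $X_i = \tau_{ij} X_j$. Substituting this relation into the evolution of $X_i - \tau_{ij} X_j$ and using flow-invariance of $\Omega_\phi$ together with $W > 0$ kills the homogeneous term and forces $\tau_{ij} = \sqrt{d_i}/\sqrt{d_j}$, i.e. $\lambda_{ij} = 1$, giving the desired plane.

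On this plane, parametrized by $(X_1, W)$, the system reduces to (\ref{resX1})--(\ref{resW}), and by (\ref{flowbox}) together with $W \geq w_0$ I may confine attention to the box $\mathcal{B}$. The next step is a routine planar phase analysis: the only critical point in $\mathcal{B}$ is $E_+$, and inspecting the signs of $X_1'$ and $W'$ on the edges of $\mathcal{B}$ and along the nullcline $X_1 = \sqrt{\epsilon d_1/(2n)}\,W$ shows that any trajectory of $\Omega_\phi$ remaining in $\mathcal{B}$ for all forward time must converge to $E_+$ along its stable manifold. Linearizing gives eigenvalues $\frac{1}{2}(-1 \pm \sqrt{1 + 8/n})$ of opposite sign, so $E_+$ is a hyperbolic saddle and its stable set is a single curve; by connectedness, $\Omega_\phi$ coincides with a portion of that curve.

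The main obstacle is the final step: ruling out that $\Omega_\phi$ is a nondegenerate arc of the stable curve rather than the single point $E_+$. For this I would use $\Omega_\phi \subset \Omega \subset \{\Qa \leq 0\}$. The quadric $\Qa = 0$ meets the plane in the ellipse $\frac{n X_1^2}{d_1} + \frac{\epsilon}{2}(n-1)W^2 = 1$ through $E_+$; computing the normal $\nabla Q|_{E_+}$ and the tangent direction $\xi$ to the stable curve at $E_+$ pointing out of $\mathcal{B}$, the transversality inequality $\nabla Q|_{E_+}\cdot \xi < 0$ shows the stable curve immediately exits $\{\Qa \leq 0\}$ as one moves backward from $E_+$. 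Since $\Omega_\phi$ must stay in $\{\Qa \leq 0\}$, it can contain no point of the stable curve besides $E_+$, whence $\Omega_\phi = \{E_+\}$. The delicate point is getting the signs right in this transversality computation, as that single inequality is exactly what collapses the limit set to a point.
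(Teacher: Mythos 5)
Your proposal is correct and follows essentially the same route as the paper's proof: collapsing $\Omega_{\phi}$ onto the plane $X_i = (\sqrt{d_i}/\sqrt{d_1})X_1$ via the monotone ratios $\tilde{X}_i/\tilde{X}_j$, running the planar phase analysis in the box $\mathcal{B}$ to identify $\Omega_{\phi}$ with a portion of the stable curve of the saddle $E_+$, and then using the transversality inequality $\nabla \Qa|_{E_+}\cdot \xi < 0$ against the constraint $\Omega_{\phi} \subset \{\Qa \leq 0\}$ to collapse that portion to the single point $E_+$. Your phrasing of the last step (directly noting that $\Omega_{\phi}$ lies in $\{\Qa \leq 0\}$ while the stable curve leaves that region) is a slightly more streamlined version of the paper's contradiction, which is stated in terms of points of $\gamma$ approaching the stable curve, but the content is the same.
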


\begin{lemma} \label{Etraj}
A trajectory of the full system $($\ref{eqnX}$)$-$($\ref{eqnW}$)$
lying in a compact set and having $\omega$-limit
set $\{E_+ \}$  must converge to $E_+$. Moreover, either it lies entirely in the
invariant submanifold ${\mathcal S}:=\{ \Qa=0, \Ha =1 \}$ or it eventually
lies $($for finite values of $s$$)$ in $\{\Qa < 0, \Ha  > 1 \}$ or $\{\Qa > 0, \Ha < 1\}$.
\end{lemma}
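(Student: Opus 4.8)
The plan is to treat the two assertions separately. For the convergence claim I would argue purely topologically: the trajectory $\phi$ lies in a compact set and its $\omega$-limit set is the single point $\{E_+\}$. If $\phi(s)$ did not converge to $E_+$, there would be $\eta>0$ and $s_n\to+\infty$ with $|\phi(s_n)-E_+|\ge\eta$; compactness would then yield a subsequential limit $q\ne E_+$ lying in the $\omega$-limit set, contradicting $\Omega_\phi=\{E_+\}$. Hence $\phi(s)\to E_+$.

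For the trichotomy I would work entirely with the pair $(\Ha-1,\Qa)$, which by (\ref{eqnHQ1})--(\ref{eqnHQ2}) evolves by a system $x'=A(s)x$ that is linear in $(\Ha-1,\Qa)$, with $f_2,f_3>0$. The first alternative is immediate: if $\Ha-1$ and $\Qa$ both vanish at some $s_0$, then by the all-orders vanishing noted after (\ref{eqnHQ2}) they vanish identically and $\phi\subset\mathcal{S}$. So assume $(\Ha-1,\Qa)$ never vanishes. Since $\phi\to E_+$ and $E_+\in\mathcal{S}$, we have $A(s)\to A_\infty$, and using $\sum_j X_j^2=\tfrac{\epsilon}{2}W^2=\tfrac1n$ at $E_+$ one computes $A_\infty=\left(\begin{smallmatrix}-1 & 1\\ 2/n & 0\end{smallmatrix}\right)$, whose eigenvalues $\tfrac12(-1\pm\sqrt{1+8/n})$ coincide with those already found for the restricted flow at $E_+$. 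This is a saddle; the stable eigenvector $v_-$ has slope $\Qa/(\Ha-1)=\tfrac12(1-\sqrt{1+8/n})<0$, so $v_-$ lies precisely in the two open quadrants $\{\Ha>1,\Qa<0\}$ and $\{\Ha<1,\Qa>0\}$, while the unstable eigenvector $v_+$ (slope $>0$) lies in the complementary quadrants $\{\Ha>1,\Qa>0\}$ and $\{\Ha<1,\Qa<0\}$.

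The heart of the matter is that a solution $(\Ha-1,\Qa)$ tending to $0$ must approach along the stable direction. I would establish this by the asymptotic-integration theory for $x'=(A_\infty+B(s))x$ with $B(s)\to0$ (Coppel/Hartman): since $A_\infty$ has eigenvalues of opposite sign, the solutions decaying to $0$ form a one-parameter family asymptotic to $e^{\lambda_- s}v_-$. A nonzero decaying solution is therefore eventually aligned with $\pm v_-$ and hence eventually confined to one of the two good quadrants, which is exactly the stated dichotomy; the required integrability of $\|A(s)-A_\infty\|$ follows once the approach to $E_+$ is known to be exponential, as the hyperbolicity of the $(\Ha-1,\Qa)$-block yields via the stable manifold theorem.

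The step I expect to be the main obstacle is precisely this last one: ruling out that the converging trajectory drifts through the bad quadrants $\{\Ha>1,\Qa>0\}$ and $\{\Ha<1,\Qa<0\}$. Heuristically these are forbidden because there $\Qa'\approx f_3(\Ha-1)$ has the same sign as $\Qa$, so $|\Qa|$ increases and convergence fails; but since $f_4=2(\sum_j X_j^2-\tfrac{\epsilon}{2}W^2)$ vanishes at $E_+$, making this domination rigorous near the coordinate axes needs the quantitative control on $A(s)-A_\infty$ supplied by the asymptotic-integration/invariant-manifold argument rather than a crude sign estimate. Once the stable-manifold description is in hand, the fact that the two good quadrants meet only at $E_+$ (so passing between them would force entry into a bad quadrant, or through the fixed point, both excluded) shows the trajectory is eventually trapped in a single one, completing the proof.
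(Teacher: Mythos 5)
Your convergence argument is identical to the paper's, but your proof of the trichotomy takes a genuinely different route. The paper linearises the \emph{full} system (\ref{eqnX})--(\ref{eqnW}) at $E_+$: it finds exactly one positive eigenvalue, observes that all stable eigenvectors except one (call it $v$, for the eigenvalue $\frac12(-1-\sqrt{1+8/n})$) are tangent to $\mathcal{S}$, i.e.\ annihilate $\nabla\Ha$ and $\nabla\Qa$, and checks $v\cdot\nabla\Qa>0$, $v\cdot\nabla\Ha<0$; since a convergent trajectory must lie in the stable manifold, the signs of $(\Ha-1,\Qa)$ along it are controlled by the $v$-component. You instead work entirely inside the two-dimensional nonautonomous linear system (\ref{eqnHQ1})--(\ref{eqnHQ2}) and argue that a decaying solution must align asymptotically with the stable eigenvector of the limit matrix $A_\infty$, whose slope is negative, hence lies eventually in $\{\Ha>1,\Qa<0\}$ or $\{\Ha<1,\Qa>0\}$. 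Your computation of $A_\infty$ and its eigenvalues is correct, and the fact that they reproduce the two extreme eigenvalues of the paper's full linearisation shows the two arguments exploit the same saddle structure; your version has the merit of ignoring the other $2r-1$ directions, while the paper's needs only finite-dimensional linear algebra plus the stable manifold theorem and no asymptotic-integration machinery.

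The one step you must repair is the justification of the hypotheses of the asymptotic-integration theorem. Levinson's theorem (the $L^1$ version behind your Coppel/Hartman citation) needs $\int\|A(s)-A_\infty\|\,ds<\infty$, and the entries $f_1=\sum_j X_j^2-1-\frac{\epsilon}{2}W^2$, $f_3=\epsilon W^2$, $f_4=2\bigl(\sum_j X_j^2-\frac{\epsilon}{2}W^2\bigr)$ are functions of the full phase variables $(W,X)$, not of $(\Ha-1,\Qa)$. So integrability requires a rate (say exponential) for the convergence of the \emph{full} trajectory to $E_+$, and this does not follow, as you assert, from hyperbolicity of the $2\times2$ block together with the stable manifold theorem; it requires hyperbolicity of $E_+$ for the full linearisation --- precisely the computation the paper performs (eigenvalues $\frac12(-1\pm\sqrt{1+8/n})$, $-1/n$ with multiplicity $r$, and $-1$ with multiplicity $r-1$, all nonzero), after which the stable manifold theorem does give the exponential rate. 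Alternatively, you can avoid rates entirely: since your first paragraph already yields $A(s)\to A_\infty$, the roughness theorem for exponential dichotomies shows that on $[T,\infty)$ the line of decaying solutions is within $O(\sup_{s\ge T}\|A-A_\infty\|)$ of ${\rm span}(v_-)$, which gives the same directional alignment. With either repair your argument is complete; note also that your closing worry about the trajectory migrating between the two good quadrants is moot once alignment with $\pm v_-$ is established, since both components of $v_-$ are nonzero.
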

\begin{proof} Let $\Gamma$ denote such a trajectory. Convergence follows because
otherwise one can find $s_n \rightarrow +\infty$ with $\Gamma(s_n)$ lying outside
some open ball around $E_+$. But a subsequence of $\Gamma(s_n)$ would then converge
to a point in the $\omega$-limit set distinct from $E_+$, a contradiction.

Consider the linearisation at $E_+$ of the full system
(\ref{eqnX})-(\ref{eqnW}). One finds that the linearised operator
has only one positive eigenvalue $\frac{1}{2} ( -1 + \sqrt{1 + \frac{8}{n}})$;
the remaining eigenvalues are $-1/n$ ($r$ times), $-1$ ($r-1$ times) and
$\frac{1}{2} ( -1 - \sqrt{1 + \frac{8}{n}})$. Furthermore, the $-1/n$ eigenspace
consists of vectors of the form $(W, X, Y) = (0, 0, Y)$ while the $-1$ eigenspace
consists of vectors of the form $(W, X, Y)=(0, \xi, 0)$ where $\xi$ is orthogonal
to $\eta:=(\sqrt{d_1}, \cdots, \sqrt{d_r})$. An eigenvector for the remaining
negative eigenvalue is
$v:=(\frac{n}{n-1}(\frac{1}{2}(-1+\sqrt{1+ \frac{8}{n}})+ \frac{2}{n}),
-\sqrt{\frac{2 \epsilon}{n}} \eta, 0)$.

Notice that except for $v$, the rest of the eigenvectors with
negative eigenvalues are orthogonal to $\nabla \Qa$ and $\nabla \Ha$ at $E_+$.
So they actually span the tangent space at $E_+$ to $\mathcal S$. (In particular,
for the restricted flow on $\mathcal S$ the point $E_+$ is actually a sink.)
One checks that $v \cdot \nabla \Qa > 0$ and $v \cdot \nabla \Ha < 0$ at $E_+$.
Now our trajectory $\Gamma$ must ultimately lie in the stable manifold of $E_+$,
so any trajectory in the stable manifold at $E_+$ which does not lie
in $\{\Ha =1, \Qa =0 \}$ must lie in $\{\Qa < 0, \Ha  > 1 \}$
or $\{\Qa > 0, \Ha < 1\}$.
\end{proof}

\begin{cor} \label{Omeganonzero2}
Suppose that all points of $\Omega$ have nonzero $W$-coordinate. Then $\Omega$ is
a union of trajectories lying in the intersection of the invariant submanifold
${\mathcal S} :=\{\Qa=0, \Ha=1 \}$ with the invariant set $\{ Y=0 \}$. \qed
\end{cor}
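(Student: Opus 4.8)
The plan is to take an arbitrary point $p \in \Omega$ and analyse the full-system trajectory $\phi$ through it, showing that $\phi$ is forced to lie in $\mathcal{S} \cap \{Y=0\}$; since $\Omega$ is the union of all such trajectories, this will immediately give the claim.

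First I would record the inputs that make $\phi$ eligible for the earlier results. Because $\Omega$ is compact and flow-invariant (as noted in the $\omega$-limit discussion of \S 3), the trajectory $\phi$ through any $p \in \Omega$ stays inside the compact set $\Omega$ for all $s$, so the compactness hypothesis of Lemma \ref{Etraj} is met. Moreover, since we are in the case where every point of $\Omega$ has nonzero $W$-coordinate, Proposition \ref{Omeganonzero1} tells us that the $\omega$-limit set $\Omega_{\phi}$ of $\phi$ is exactly the singleton $\{E_+\}$. Thus $\phi$ satisfies both hypotheses of Lemma \ref{Etraj}.

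Next I would apply Lemma \ref{Etraj} to $\phi$, obtaining the trichotomy: either $\phi$ lies entirely in $\mathcal{S} = \{\Qa=0, \Ha=1\}$, or $\phi$ eventually lies in $\{\Qa < 0, \Ha > 1\}$, or it eventually lies in $\{\Qa > 0, \Ha < 1\}$. The crux is to eliminate the last two alternatives. Recall from (\ref{flowbox}) that $\Omega \subset \{\Ha \leq 1, \Qa \leq 0\}$, and since $\phi \subset \Omega$ this bound holds along $\phi$. The region $\{\Qa < 0, \Ha > 1\}$ violates $\Ha \leq 1$, and the region $\{\Qa > 0, \Ha < 1\}$ violates $\Qa \leq 0$, so both are incompatible with $\phi$ remaining in $\Omega$. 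Hence $\phi$ must lie entirely in $\mathcal{S}$.

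Finally, recall that the $\omega$-limit discussion of \S 3 already established $\Omega \subset \bigcap_{i=1}^{r} \{Y_i = 0\} = \{Y=0\}$, so $\phi \subset \{Y=0\}$ as well. Combining, every trajectory through a point of $\Omega$ lies in $\mathcal{S} \cap \{Y=0\}$, and since $\Omega$ is the union of all such trajectories the corollary follows. I do not anticipate a genuine obstacle: the argument is essentially an assembly of Proposition \ref{Omeganonzero1}, Lemma \ref{Etraj}, and the a priori sign bounds of (\ref{flowbox}). The only point deserving care is confirming that the compactness and $\omega$-limit hypotheses of Lemma \ref{Etraj} are truly satisfied by $\phi$, which is exactly what the flow-invariance and compactness of $\Omega$ provide.
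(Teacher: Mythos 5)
Your proof is correct and is exactly the argument the paper intends: the corollary is stated with a \qed because it follows by assembling Proposition \ref{Omeganonzero1} (giving $\Omega_{\phi}=\{E_+\}$ for each trajectory $\phi$ in the compact, flow-invariant set $\Omega$), Lemma \ref{Etraj} (the trichotomy), and the bounds $\Ha \leq 1$, $\Qa \leq 0$ from (\ref{flowbox}) together with $\Omega \subset \{Y=0\}$ to rule out the two non-$\mathcal{S}$ alternatives. No gaps; your care in verifying the compactness and $\omega$-limit hypotheses of Lemma \ref{Etraj} is precisely the right check.
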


We shall denote the invariant set $\{\Qa=0, \Ha =1, Y=0 \}$ by $\hat{\mathcal S}$.
Note that $\hat{\mathcal S}$, like $\mathcal S$, is compact because of the
condition $\Qa=0$.

We may now observe that on the set $\{\Ha =1, Y=0\}$  the quantities
$G:= \sum_{j=1}^{r} X_j^2$ and $J = G -\frac{\epsilon}{2} W^2$ satisfy

\[
G^{\prime} = 2(G-1)J
\]
and
\begin{equation} \label{eqnJ}
J^{\prime} = 2J(J-1).
\end{equation}
Moreover, if we in addition impose the condition $\Qa=0$, then we may rewrite $J$
as $J = \frac{nG -1}{n-1}$. The Cauchy-Schwartz inequality applied to the
condition $\Ha := \sum_{i=1}^{r} \sqrt{d_i} X_i =1$ shows that $G \geq \frac{1}{n}$
on $\Ha =1$, with equality if and only if $X_i =\frac{\sqrt{d_i}}{n}$ for all $i$.
Furthermore, the condition $\Qa =0$ implies that $\Ly \leq 0$ and hence $G \leq 1$.

Hence we find that on the set $\hat{\mathcal S}$ the function $J$ satisfies
$0 \leq J \leq 1$, with $J=0$ only at the points $E_{\pm}$ given by
$(W,X,Y)=(\pm \sqrt{\frac{2}{\epsilon n}}, \frac{\sqrt{d_1}}{n}, \cdots,\frac{\sqrt{d_r}}{n} ,0, \cdots)$,
and $J=1$ only at  the points with $Y=W=0$ and $\sum_{j=1}^{r} X_j^2 =1$.
Note that these latter points are the critical points of the flow discussed
in the proof of Proposition \ref{Omegazero}.

It now follows from equation (\ref{eqnJ}) that $J$ is a Lyapunov function on
$\hat{\mathcal S}$. Points on the plane $W=0$ in $\hat{\mathcal S}$ have $J=1$
and are fixed under the flow. Points in the regions $W >0$ and $W < 0$ flow
to the critical points $E_{+}$ and $E_{-}$ respectively.

We can now deduce:

\begin{prop} \label{closure}
 Let $\gamma$ be one of the $r$-parameter
  family of trajectories with $Y_i,W$ positive emanating from the
  critical point $($\ref{critpoint}$)$ and entering the region $\{ \Qa
  < 0 \}$. Then its $\omega$-limit set $\Omega$ contains the origin.
\end{prop}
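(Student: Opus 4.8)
The plan is to argue by contradiction, relying on the case division already established. Suppose $\Omega$ does not contain the origin. Then the contrapositive of Proposition \ref{Omegazero} shows that $\Omega$ contains no point with vanishing $W$-coordinate; since $W \geq 0$ on $\Omega$ (being a limit of the positive function $W$ along $\gamma$), in fact $W > 0$ throughout $\Omega$. We are therefore in the situation of Corollary \ref{Omeganonzero2}, so $\Omega$ is a union of complete trajectories contained in $\hat{\mathcal S} = \{\Qa = 0, \Ha = 1, Y = 0\}$, all lying in $\{W > 0\}$.

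Next I would exploit the Lyapunov function $J = \frac{nG-1}{n-1}$ on $\hat{\mathcal S}$ whose behaviour was just analyzed: on $\{W > 0\}$ it is strictly decreasing, tends to $J = 0$ (the point $E_+$) as $s \to +\infty$, and equals $1$ only on the fixed locus $\{W = 0\}$. Take any trajectory $\phi \subset \Omega$. If $\phi$ is non-constant, then as $s \to -\infty$ the monotone quantity $J$ increases to a limit $L$; because the $\alpha$-limit set is flow-invariant and $J$ is constant there, the relation $J^{\prime} = 2J(J-1) = 0$ forces $L \in \{0,1\}$, and strict monotonicity excludes $L = 0$. Hence $L = 1$, so the (nonempty) $\alpha$-limit set of $\phi$ lies in $\{W = 0\}$. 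As $\Omega$ is closed and flow-invariant it would then contain a point with $W = 0$, contradicting the previous paragraph. Thus every trajectory in $\Omega$ is a fixed point, and since $E_+$ is the only fixed point of the system with $W > 0$ lying in $\hat{\mathcal S}$ (Remark \ref{eqpt}), connectedness of $\Omega$ gives $\Omega = \{E_+\}$.

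Finally I would rule out $\Omega = \{E_+\}$ using Lemma \ref{Etraj}. If $\Omega = \{E_+\}$, then $\gamma$ converges to $E_+$, so by that lemma $\gamma$ either lies entirely in $\mathcal S = \{\Qa = 0, \Ha = 1\}$, or eventually lies in $\{\Qa < 0, \Ha > 1\}$, or eventually lies in $\{\Qa > 0, \Ha < 1\}$. But $\gamma$ was chosen to enter $\{\Qa < 0\}$, so by Proposition \ref{HQbounds} it satisfies $\Qa < 0$ and $\Ha < 1$ for all finite $s$, which is incompatible with all three alternatives. This contradiction shows that the assumption that $\Omega$ omits the origin is untenable.

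I expect the genuine obstacle to be the last step: ruling out $E_+$ as the whole $\omega$-limit set. This is precisely where the global sign information carried by $\gamma$, namely $\Qa < 0$ and $\Ha < 1$ (Proposition \ref{HQbounds}), must be matched against the local eigenvector analysis of the linearization at $E_+$ encoded in Lemma \ref{Etraj}; the point is that the stable directions at $E_+$ transverse to $\mathcal S$ all cross into the ``wrong'' sign regions. By comparison, the backward-escape argument on $\hat{\mathcal S}$ is routine once $J$ is recognized as a Lyapunov function, and the reduction to $\hat{\mathcal S}$ itself is supplied by the earlier results.
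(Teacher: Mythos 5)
Your proposal is correct and follows essentially the same route as the paper: reduce via Proposition \ref{Omegazero} to the case where $\Omega$ avoids $\{W=0\}$, place $\Omega$ in $\hat{\mathcal S}\cap\{W>0\}$ by Corollary \ref{Omeganonzero2}, use the Lyapunov function $J$ and backward flow-invariance to force $\Omega=\{E_+\}$, and then contradict Proposition \ref{HQbounds} via Lemma \ref{Etraj}. Your treatment of the backward-flow step (showing the $\alpha$-limit set of any non-constant trajectory in $\Omega$ lies in $\{W=0\}$ because $J$ increases to $1$) is in fact slightly more careful than the paper's one-line version of the same argument.
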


\begin{proof}
  By Proposition \ref{Omegazero} we just have to consider the case
  when $\Omega$ contains no point with zero $W$-coordinate.  By
  Corollaries \ref{Omeganonzero2} and \ref{flowbox}  we know that
  $\Omega$ is contained in the region of $\hat{\mathcal S}$ with $W
  > 0$. But it is also closed and invariant under forwards and backwards flows. Our discussion above
shows that $\Omega = \{ E_+ \}$, as if it contained another point then
the backwards trajectory would converge to a critical point with $W=0$, which would thus have to
lie in $\Omega$.

So $\Omega$ consists of the critical point $\{ E_+ \}$. Now Lemma \ref{Etraj} shows that
the bounds established in Proposition \ref{HQbounds} are violated, so this case cannot occur.
\end{proof}

\medskip
Our next aim is to show that the trajectory $\gamma$ actually converges
to the origin.

The linearisation of our flow (\ref{eqnX})-(\ref{eqnW}) about the origin is
\begin{eqnarray*}
X_i^{\prime} &=& - X_i, \\
Y_i^{\prime}   &=&  0, \\
W^{\prime}   &=&  0,
\end{eqnarray*}
so the $0$-eigenspace is given by $X=0$. Following Glendinning \cite{Gl},
for example, we seek a centre manifold with local expression in the form
$X_i = h_i(Y,W)$, where we need $h_i$ and $D h_i$ to vanish at the origin.

We write
\[
X_i = \sum_{j,k} \, a_{jk}^{(i)} Y_j Y_k + \sum_{j}\, b_j^{(i)} Y_j W + c^{(i)} W^2 + \cdots
\]
where $\ldots$ refers to terms of higher order than quadratic order.  Now
$X_i^{\prime} = \sum_{k} \frac{\partial h_i}{\partial Y_k}
Y_{k}^{\prime} + \frac{\partial h_i}{\partial W} W^{\prime}$, while
$X_i^{\prime}$ can also be expressed using (\ref{eqnX}). Equating terms we
find the quadratic terms in the expression for $X_i$ are as follows:
\[
X_i = \frac{1}{\sqrt{d_i}} Y_i^2 + \frac{\epsilon \sqrt{d_i}}{2} W^2 + \cdots
\]
So $Y_i^{\prime} = Y_i ( -\frac{1}{d_i} Y_i^2 - \epsilon W^2 + \cdots)$
and $W^{\prime} = -\frac{\epsilon}{2} W^3 + \cdots,$
where $\cdots$ in the above refers to terms with total degree greater
than those displayed.

It follows from this that
$$ \left(\sum_{i=1}^r \, Y_i^2 + W^2 \right)^{\prime} = -2 \left( \sum_{i=1}^r \frac{1}{d_i} Y_i^4
     + \epsilon W^2 \sum_{i=1}^r \, Y_i^2 + \frac{\epsilon}{2} W^4 \right) + \mbox{\rm higher order terms}.
$$
In other words, $(\sum_{i=1}^r \, Y_i^2) + W^2$ is a Lyapunov function for
the local dynamics of our centre manifold near the origin.  Hence the origin
is asymptotically stable, i.e., on the centre manifold the flow near the origin
converges to the origin. Since the nonzero eigenvalues
of the linearisation of the full system are negative, the centre manifold
theorem yields

\begin{prop} \label{sink}
The origin is a sink for our flow. \qed
\end{prop}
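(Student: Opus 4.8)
The plan is to prove this by center manifold reduction, since the linearisation of (\ref{eqnX})--(\ref{eqnW}) at the origin is non-hyperbolic. As computed just above, the linearised operator has the eigenvalue $-1$ with multiplicity $r$ (along the $X$-directions) together with the eigenvalue $0$ with multiplicity $r+1$ (spanning the $Y$ and $W$ directions), so the center subspace is exactly $\{X=0\}$. Because every non-central eigenvalue is strictly negative, the reduction principle for center manifolds (see \cite{Gl}) guarantees that the origin is a sink for the full flow precisely when it is asymptotically stable for the flow restricted to a local center manifold. The entire problem therefore reduces to analysing the dynamics on that manifold.

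First I would realise the center manifold locally as a graph $X_i = h_i(Y,W)$, with $h_i$ and $Dh_i$ vanishing at the origin, and expand each $h_i$ as a power series beginning at quadratic order. Substituting this ansatz into the invariance identity $X_i^{\prime} = \sum_k \frac{\partial h_i}{\partial Y_k} Y_k^{\prime} + \frac{\partial h_i}{\partial W} W^{\prime}$ and matching against the right-hand side of (\ref{eqnX}) determines the quadratic coefficients, yielding $X_i = \frac{1}{\sqrt{d_i}} Y_i^2 + \frac{\epsilon \sqrt{d_i}}{2} W^2 + \cdots$. Feeding this expression back into (\ref{eqnY}) and (\ref{eqnW}) produces the reduced system $Y_i^{\prime} = Y_i(-\frac{1}{d_i} Y_i^2 - \epsilon W^2 + \cdots)$ and $W^{\prime} = -\frac{\epsilon}{2} W^3 + \cdots$ that governs the flow on the center manifold.

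The decisive step is then to exhibit a Lyapunov function for this reduced flow. I would test $V = \sum_{i=1}^r Y_i^2 + W^2$, and a direct computation using the reduced equations gives
\[
V^{\prime} = -2\left( \sum_{i=1}^r \frac{1}{d_i} Y_i^4 + \epsilon W^2 \sum_{i=1}^r Y_i^2 + \frac{\epsilon}{2} W^4 \right) + \mbox{higher order terms},
\]
whose leading part is negative definite because $\epsilon > 0$. Hence $V$ strictly decreases along nontrivial trajectories near the origin, so the origin is asymptotically stable for the restricted flow. Combined with the reduction principle and the strict negativity of the transverse eigenvalues $-1$, this upgrades to the statement that the origin is a sink for the full system.

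The hard part will not be any single calculation but rather controlling the center manifold to just enough order that the sign of the leading term of $V^{\prime}$ is unambiguous: one must verify that the neglected higher-order contributions genuinely cannot overturn the negative-definite quartic expression above, and hence that computing $h_i$ only to quadratic order suffices. A secondary subtlety is to invoke the reduction principle correctly, namely that asymptotic stability on the center manifold together with strictly negative transverse eigenvalues is exactly what is needed to conclude that the origin attracts all nearby trajectories of (\ref{eqnX})--(\ref{eqnW}).
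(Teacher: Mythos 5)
Your proposal is correct and follows essentially the same route as the paper: a centre manifold reduction with the graph ansatz $X_i = h_i(Y,W)$, the same quadratic approximation $X_i = \frac{1}{\sqrt{d_i}} Y_i^2 + \frac{\epsilon \sqrt{d_i}}{2} W^2 + \cdots$, the same Lyapunov function $\sum_{i=1}^r Y_i^2 + W^2$ with the same negative-definite quartic leading term, and the same final appeal to the centre manifold theorem using the strictly negative transverse eigenvalues. No gaps to report.
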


Combining Prop \ref{sink} with Prop \ref{closure} (which showed that
the trajectory $\gamma$ approaches the origin arbitrarily closely), we have

\begin{thm} \label{omegalimit}
The trajectory $\gamma$ converges to the origin as $s$ tends to $+\infty$. \qed
\end{thm}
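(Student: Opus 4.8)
The plan is to deduce the theorem directly from the two immediately preceding propositions, which between them isolate exactly the two ingredients needed: that the origin is attracting, and that $\gamma$ comes arbitrarily close to it. The mechanism is the standard one for convergence to an asymptotically stable equilibrium: once a forward trajectory enters the basin of attraction it cannot leave, and is thereby forced into the equilibrium.

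First I would invoke Proposition \ref{sink}. Being a sink means the origin is asymptotically stable, so there is an open neighbourhood $U$ of the origin with the property that every forward trajectory meeting $U$ stays in $U$ for all larger $s$ and converges to the origin as $s \to +\infty$. The substance behind this is the centre manifold analysis already carried out: the nonzero eigenvalues of the linearisation at the origin are all negative (the $X_i$-directions contract), while on the centre manifold the function $\sum_{i} Y_i^2 + W^2$ was shown to be a strict Lyapunov function, so the reduced flow converges to the origin. The reduction principle for centre manifolds then upgrades this to asymptotic stability of the origin for the full flow, which is precisely the existence of such a forward-invariant basin $U$.

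Next I would invoke Proposition \ref{closure}, which states that the origin lies in the $\omega$-limit set $\Omega$ of $\gamma$. By the definition of $\Omega$ this yields a sequence $s_n \to +\infty$ with $\gamma(s_n) \to (0,0,0)$, so for all sufficiently large $n$ we have $\gamma(s_n) \in U$. Fixing one such $s_n =: s_\ast$, the restriction of $\gamma$ to $[s_\ast, \infty)$ is a forward trajectory meeting $U$; by the previous paragraph it therefore satisfies $\gamma(s) \to (0,0,0)$ as $s \to +\infty$, which is the assertion of the theorem.

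I do not expect a genuine obstacle, since the two propositions do all the real work; the only point requiring care is the logical step that ``$\gamma$ accumulates at an asymptotically stable point'' implies ``$\gamma$ converges to that point.'' Here one must use that $U$ is genuinely forward-invariant, and not merely a neighbourhood into which nearby orbits are eventually drawn --- otherwise $\gamma$ could in principle enter and exit $U$ repeatedly without converging. That forward-invariance is guaranteed by asymptotic stability in the Lyapunov sense, which is exactly what the Lyapunov function on the centre manifold in Proposition \ref{sink} supplies.
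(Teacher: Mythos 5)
Your proposal is correct and follows exactly the paper's argument: the paper proves this theorem by the one-line observation that Proposition \ref{closure} puts the origin in the $\omega$-limit set of $\gamma$ while Proposition \ref{sink} makes the origin asymptotically stable, so the trajectory must converge to it. Your elaboration of the standard basin-of-attraction step (including the forward-invariance point) is a faithful, slightly more detailed rendering of what the paper leaves implicit.
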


The above theorem allows us to deduce
information about the asymptotics of our metric
once we derive the following limit; the proof is similar to that for
Lemma 3.7 in \cite{DW2}.

\begin{lemma} \label{limXW}
We have $\lim_{s \rightarrow +\infty} \frac{X_i}{W^2} :=
\Lambda_i = \frac{1}{\sigma_i^2 \sqrt{d_i}} + \frac{\epsilon}{2} \sqrt{d_i}$.
\end{lemma}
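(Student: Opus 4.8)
The plan is to study the quotient $P_i := X_i/W^2$ directly, by deriving for it an asymptotically autonomous scalar linear ODE whose coefficients have controllable limits. By Theorem \ref{omegalimit} the trajectory $\gamma$ converges to the origin as $s\to+\infty$, so $X_i,Y_i,W\to 0$; in particular both the numerator and denominator of $P_i$ tend to $0$, which is why a direct computation of the limit (or a naive L'Hopital step) is circular. Instead I would first differentiate $P_i$ with the quotient rule, substituting (\ref{eqnX}) for $X_i'$ and (\ref{eqnW}) for $W'$. A short computation, in which the $\sum_j X_j^2$ contributions from the two equations combine, yields
\[
P_i' = \left(-1 - \sum_{j=1}^{r} X_j^2 + \epsilon W^2\right) P_i + \frac{1}{\sqrt{d_i}}\,\frac{Y_i^2}{W^2} + \frac{\epsilon}{2}\left(\sqrt{d_i} - X_i\right),
\]
so that $P_i$ obeys a scalar equation $P_i' = a(s)\,P_i + b(s)$.

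Next I would read off the limits of the coefficients. Since $X_j\to 0$ and $W\to 0$, the coefficient $a(s) = -1 - \sum_j X_j^2 + \epsilon W^2$ tends to $-1$. For the inhomogeneous term, Corollary \ref{WYlim} gives $W/Y_i\to\sigma_i$, hence $Y_i^2/W^2\to 1/\sigma_i^2$ (interpreted as $0$ when $\sigma_i=+\infty$), while $X_i\to 0$; therefore $b(s)\to \frac{1}{\sqrt{d_i}\,\sigma_i^2} + \frac{\epsilon}{2}\sqrt{d_i} = \Lambda_i$. A useful point here is that this is valid uniformly whether $\sigma_i$ is finite or infinite, which is precisely why it is cleaner to pass through $X_i/W^2$ than through $X_i/Y_i^2$ via (\ref{XY}) (the latter has a $W^2/Y_i^2$ term that diverges when $\sigma_i=+\infty$). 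Note also that $\Lambda_i>0$ because $\epsilon>0$.

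Finally I would deduce $P_i\to\Lambda_i$ from the asymptotic form $a(s)\to -1<0$, $b(s)\to\Lambda_i$. This is the one step requiring care, since a priori $P_i$ is merely a ratio of two quantities tending to zero and could oscillate or escape. The argument is a trapping estimate: given $\delta>0$, choose $s_0$ so large that for $s\ge s_0$ one has $|a(s)+1|<\eta$ and $|b(s)-\Lambda_i|<\eta$, with $\eta$ small relative to $\delta$. Using $P_i=X_i/W^2>0$ (Lemma \ref{Xpos}) and the fact that $s\mapsto a(s)P_i+b(s)$ is decreasing in $P_i$, one checks that $P_i'\le -c<0$ whenever $P_i\ge\Lambda_i+\delta$ and $P_i'\ge c>0$ whenever $0<P_i\le\Lambda_i-\delta$. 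Hence $P_i$ cannot remain outside $[\Lambda_i-\delta,\Lambda_i+\delta]$, this interval is eventually absorbing, and, $\delta$ being arbitrary, $P_i\to\Lambda_i$. I expect this trapping argument—ruling out blow-up to $+\infty$ and sustained oscillation—to be the main obstacle, though it is routine once the linear equation above is in hand; the differentiation of $P_i$ and the identification of the coefficient limits are straightforward.
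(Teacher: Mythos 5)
Your proposal is correct and takes essentially the same route as the paper: both derive the scalar linear ODE for $X_i/W^2$ (your equation is the paper's (\ref{eqnXW}) with the term $\epsilon X_i = \epsilon W^2 P_i$ kept in the coefficient of $P_i$ rather than folded into the inhomogeneous part), identify the coefficient limits $-1$ and $\Lambda_i$ via Theorem \ref{omegalimit} and Corollary \ref{WYlim}, and finish with the same trapping-strip argument. The only cosmetic difference is in excluding the case where the ratio never enters the strip — you use a uniform derivative bound forcing entry in finite time, while the paper uses monotone convergence to a limit incompatible with the limiting equation — and these are interchangeable.
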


\begin{proof}
Observe that $X_i/W^2$ satisfies the differential equation
\begin{equation} \label{eqnXW}
\left( \frac{X_i}{W^2} \right)^{\prime} =  \left(-1 -\sum_{j=1}^{r} X_j^2
 \right) \frac{X_i}{W^2} + \frac{Y_i^2}{W^2 \sqrt{d_i}} +
\frac{\epsilon}{2} (X_i + \sqrt{d_i})
\end{equation}
By Theorem \ref{omegalimit}, the coefficient $F_i$ of $\frac{X_i}{W^2}$
tends to $-1$ as $s \rightarrow +\infty$. The sum $K_i$ of the remaining
terms on the right hand side converges to $\Lambda_i$. In particular,
if $\frac{X_i}{W^2}$ tends to a finite limit $a,$ then its derivative
tends to $-a + \Lambda_i$, so $a$ must equal $\Lambda_i$.

Let $0 < \delta < \min(1, \Lambda_i)$, and pick $s^*(\delta)$ so that
$\sum_{j=1}^{r} X_j^2$ is less than $\delta$ and
$\Lambda_i -\delta < K_i < \Lambda_i + \delta$  for $s \geq s^*(\delta)$.
It follows that
if $\frac{X_i}{W^2} (s_0) \geq \frac{\Lambda_i + \delta}
{1 - \delta}$ for some $s_0 > s^*(\delta),$
then $\left(\frac{X_i}{W^2} \right)^{\prime} <0$ at $s_0$.
Similarly, if $\frac{X_i}{W^2}(s_0) \leq \frac{\Lambda_i - \delta}
{1+\delta}$, then $\left(\frac{X_i}{W^2} \right)^{\prime} >0$ at $s_0$.
So if $\frac{X_i}{W^2}$ enters the horizontal strip
$\frac{\Lambda_i - \delta}{1+\delta} < y < \frac{\Lambda_i + \delta}
{1-\delta}$ at some $s > s^*(\delta),$ it is trapped there.

On the other hand, if $\frac{X_i}{W^2}$ never enters the strip for
$s > s^*(\delta)$ then
it is either monotonic increasing to some number less than or equal to
$\frac{\Lambda_i - \delta}{1 + \delta}< \Lambda_i$,
or monotonic decreasing to some number greater than or equal to
$\frac{\Lambda_i + \delta}{1 - \delta} > \Lambda_i$. Both of these
outcomes contradict the discussion in the first paragraph.

So $\frac{X_i}{W^2}$ is indeed trapped in the strip for large enough $s$.
Since this conclusion holds for all $\delta$, the result follows.
\end{proof}

We therefore have an estimate
\begin{equation} \label{XWest}
\sum_{k=1}^{r}  X_k^2 \leq c W^4 \;\;\;{\rm for} \; s \; {\rm large}
\end{equation}
where $c$ is a positive constant. Hence, by (\ref{eqnW}), $W$ is
decreasing for sufficiently large $s$.

\begin{lemma} \label{complete}
The metric corresponding to our trajectory is complete at $s = +\infty$.
\end{lemma}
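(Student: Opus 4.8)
The plan is to translate completeness at the infinite end into a statement about arc length and then bound the growth of the $t$-coordinate directly. Recall from (\ref{def-tgi}) that $dt = W\,ds$, so the metric distance measured along the radial direction from a fixed interior point out to $s = +\infty$ is $\int^{+\infty} W\,ds$. Since the trajectory exists for all $s$ (Corollary \ref{bounds}) and $W$ stays positive, completeness at $s = +\infty$ is equivalent to the divergence of this integral, i.e. to $t \to +\infty$ as $s \to +\infty$.

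First I would record the differential inequality governing the decay of $W$. From (\ref{eqnW}) we have $W' = W\bigl(\sum_j X_j^2 - \tfrac{\epsilon}{2}W^2\bigr)$, and since $W > 0$ and $\sum_j X_j^2 \geq 0$ this gives immediately
$$W' \geq -\frac{\epsilon}{2}W^3.$$
The estimate (\ref{XWest}) shows in fact that the dropped term $W\sum_j X_j^2 = O(W^5)$ is of lower order, so $W$ decays precisely like $(\epsilon s)^{-1/2}$; but only the one-sided bound above is actually needed.

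Next I would integrate this inequality. Setting $u = W^{-2}$, which is positive and well defined since $W > 0$, the inequality becomes $u' = -2W^{-3}W' \leq \epsilon$. Integrating from a fixed $s_0$ (chosen large enough that $W$ is already small) yields $W(s)^{-2} \leq W(s_0)^{-2} + \epsilon(s - s_0)$, and hence
$$W(s) \geq \frac{1}{\sqrt{W(s_0)^{-2} + \epsilon(s - s_0)}}.$$
Integrating this lower bound then gives
$$\int_{s_0}^{+\infty} W\,ds \geq \int_{s_0}^{+\infty} \frac{ds}{\sqrt{W(s_0)^{-2} + \epsilon(s - s_0)}} = +\infty,$$
so $t \to +\infty$ as $s \to +\infty$ and the metric is complete at the infinite end.

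The argument is essentially self-contained, and the only point requiring care is the direction of the comparison: a \emph{faster} decay of $W$ could in principle make the arc-length integral converge and render the end at finite distance. What rules this out is precisely the dominant $-\tfrac{\epsilon}{2}W^3$ term in (\ref{eqnW}), which by (\ref{XWest}) is not overwhelmed by the $X$-contribution; this forces $W$ to decay no faster than $s^{-1/2}$, whose integral diverges. Thus the main (and only) obstacle is simply ensuring the correct one-sided estimate on $W$, after which the conclusion is routine.
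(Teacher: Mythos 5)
Your proof is correct and is essentially the paper's argument: both reduce completeness at the infinite end to the divergence of $\int^{\infty} W\,ds$ and deduce this from (\ref{eqnW}), which forces $W$ to decay no faster than $s^{-1/2}$. The paper compresses this into the change of variables $\int^{\infty} W\,ds = \int_{0}\frac{dW}{\frac{\epsilon}{2}W^{2}-\sum_{k}X_{k}^{2}}$ (using the eventual monotonicity of $W$ coming from (\ref{XWest}) together with $\sum_{k}X_{k}^{2}\geq 0$), whereas you integrate the equivalent one-sided inequality $W^{\prime}\geq -\frac{\epsilon}{2}W^{3}$ explicitly via $u=W^{-2}$; the content is the same.
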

\begin{proof}
The geodesic distance to infinity is $\int^{\infty} W \; ds =
\int_{0} \frac{dW}{\frac{\epsilon}{2} W^2 -\sum_{k=1}^{r} X_k^2}$, which is infinite
\end{proof}

\begin{lemma} \label{limsigma}
The quantities $\Lambda_i$ equal $\frac{\epsilon \sqrt{d_i}}{2}$. Equivalently,
the quantities $\sigma_i = \lim_{s \rightarrow +\infty} \frac{W}{Y_i}$
equal  $\infty.$ Hence $\lim_{t \rightarrow +\infty} \frac{\dot{g}_i}{g_i W}
= \frac{\epsilon}{2}.$
\end{lemma}
\begin{proof}
By (\ref{eqnY}) and (\ref{eqnW}) we have
\[
\frac{W^{\prime}}{Y_i^{\prime}} = \frac{W}{Y_i}
\frac{(\sum_k \, X_k^2 -\frac{\epsilon}{2}W^2)}
{(\sum_k  X_k^2 - \frac{X_i}{\sqrt{d_i}} -\frac{\epsilon}{2} W^2)}.
\]
The right-hand side tends to
$ \frac{\sigma_i\epsilon}{\epsilon + (2 \Lambda_i/\sqrt{d_i})}$,
and $\Lambda_i$ is positive. But by L'H{\^o}pital's rule, when the
left-hand side tends to a limit this must be $\sigma_i$.
So $\sigma_i \in (0, \infty]$ cannot be finite.
The last assertion now follows immediately from (\ref{logdiff-gi}).
\end{proof}

In fact, (\ref{eqnW}) and the estimate (\ref{XWest}) show that for any
sufficiently small $\delta > 0$, there is some $\hat{s}$ and constants
$A_1$ and $A_2$ such that on $[\hat{s}, +\infty)$ we have
$$ \frac{1}{\epsilon s + A_1} \leq W^2 \leq \frac{1}{(\epsilon-2\delta)s + A_2}.$$
As $dt = W \; ds$, we deduce $s \sim \frac{\epsilon t^2}{4}$ so
$W \sim \frac{2}{\epsilon t}$. Integrating the relation
$\frac{\dot{g}_{i}}{g_i} = \frac{1}{ \sqrt{d_i}} \frac{X_i}{W} $ then shows
that for all sufficiently small $\delta> 0$ there is some $\hat{t}$ and
positive constants $a_1, a_2$ (depending on $\delta$) satisfying
$1-  \delta < \frac{a_2}{a_1} < 1$ such that on
$[\hat{t}, +\infty)$ we have
$$(a_1 (t-\hat{t}) + 1)^{1-\delta} < \frac{g_i(t)}{g_i(\hat{t})} < (a_2 (t-\hat{t}) + 1)^{1+ \delta}.$$

\begin{rmk} \label{asymp}
These estimates are consistent with our expectation
that the  metric has an asymptotically conical geometry.
Asymptotically conical behaviour has also been observed for the
known examples of K\"ahlerian expanding Ricci solitons.  By contrast,
the known examples of steady solitons are usually asymptotic
to a paraboloid or a circle bundle over a paraboloid.
\end{rmk}

\section{\bf The flow near the critical point}
To check smoothness at the collapsing submanifold, we must now analyse
the behaviour of our trajectory as $s$ tends to $-\infty$. Recall that
we have arranged for $X_1 \rightarrow \beta = \frac{1}{\sqrt{d_1}},
\, Y_1 \rightarrow +\sqrt{1- \beta^2},$ and the remaining
variables to tend to $0$. Most of the analysis below is analogous to
that for the steady case with the role of $\sqrt{{\mathcal L}/C}$
now played by $W$.

We first recall the following useful lemma from \cite{DW2}.

\begin{lemma} \label{FHKlemma}
Suppose a function $F$ satisfies a differential equation
\begin{equation} \label{eqnFHK}
F^{\prime} = H F + K
\end{equation}
where $H, K$ are functions tending respectively to finite limits
$h,  k $ as $s$ tends to $-\infty$, where $h<0$ and $k \neq 0$.

Then either $\lim_{s \rightarrow -\infty} F(s) = -\frac{k}{h}$
or $F$ tends to $\infty$ or $-\infty$ as $s$ tends to $-\infty$. Moreover
in the case of infinite limit $F$ is monotonic for sufficiently large
negative $s$.
\end{lemma}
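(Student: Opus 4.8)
The plan is to reduce the statement to a question about an equilibrium of a perturbed linear equation and then run a barrier (invariant‑region) argument, since the $s$‑dependence of $H$ and $K$ rules out an explicit solution formula. First I would shift the unknown by the candidate limit: set $G := F + k/h$, so that $G' = H G + L$ with $L := K - (k/h)H$. Because $H \to h$ and $K \to k$ as $s \to -\infty$, we get $L \to k - (k/h)h = 0$. Thus it suffices to prove the cleaner dichotomy for the equation $G' = HG + L$ with $H \to h < 0$ and $L \to 0$: either $G \to 0$ (which is $F \to -k/h$) or $G \to \pm\infty$ monotonically.

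Second, I would fix $\delta > 0$ and use the limits of $H$ and $L$ to choose $s_\delta$ such that $H(s) < h/2 < 0$ and $|L(s)| < \tfrac{|h|}{2}\,\delta$ for all $s \le s_\delta$. The key elementary estimate is that on the slab $\{\,s \le s_\delta,\ G \ge \delta\,\}$ one has
\[ G' = HG + L < \tfrac{h}{2}G - \tfrac{h}{2}\delta = \tfrac{h}{2}(G-\delta) \le 0, \]
so $G' < 0$ there; a symmetric computation gives $G' > 0$ on $\{\,s \le s_\delta,\ G \le -\delta\,\}$.

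Third comes the trapping‑and‑divergence step, which I expect to be the main point. Suppose $G(s_0) \ge \delta$ for some $s_0 \le s_\delta$. Then $G(s) \ge \delta$ for all $s \le s_0$: otherwise there is a largest $\alpha < s_0$ with $G(\alpha) = \delta$, but $G \ge \delta$ on $[\alpha,s_0]$ forces $G'(\alpha) < 0$ by the estimate above, hence $G > \delta$ just to the left of $\alpha$, contradicting maximality. Rewriting the estimate as $\big(e^{-(h/2)s}(G-\delta)\big)' < 0$, the quantity $e^{-(h/2)s}(G-\delta)$ is decreasing in $s$; choosing any $s_1 < s_0$ (so $G(s_1) > \delta$ strictly, as $G' < 0$) gives, for $s \le s_1$,
\[ G(s) - \delta \ \ge\ \big(G(s_1)-\delta\big)\, e^{(h/2)(s-s_1)} \ \longrightarrow\ +\infty \quad (s \to -\infty), \]
because $h/2 < 0$. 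Thus $G \to +\infty$, and since $G' < 0$ throughout $(-\infty, s_0]$, $G$ (hence $F$) is monotonic there. The case $G(s_0) \le -\delta$ is identical and yields $G \to -\infty$ monotonically.

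Finally I would assemble the dichotomy. If $G \not\to +\infty$, then by the previous step no point $s_0 \le s_\delta$ can have $G(s_0) \ge \delta$, i.e.\ $G < \delta$ on $(-\infty, s_\delta]$; likewise $G \not\to -\infty$ forces $G > -\delta$ there. Hence if $G$ converges to neither $+\infty$ nor $-\infty$, then $|G| < \delta$ for all sufficiently negative $s$, and since $\delta > 0$ was arbitrary this gives $G \to 0$, that is, $F \to -k/h$; in the remaining cases $F$ diverges monotonically as just established. The only delicate point is the invariant‑region argument of the third step, where the $s$‑dependence of $H$ and $K$ forces one to replace the explicit exponential solution of the constant‑coefficient model $F' = hF + k$ by the comparison estimate above; everything else is bookkeeping.
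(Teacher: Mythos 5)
The paper itself contains no proof of this lemma: it is imported verbatim from \cite{DW2}, so there is nothing internal to compare against, and your argument must stand on its own. It essentially does, and it is the natural argument: shift by the equilibrium value $-k/h$ so that $G:=F+k/h$ satisfies $G'=HG+L$ with $L\to 0$; fix $\delta>0$ and a threshold $s_\delta$ with $H<h/2$ and $|L|<\tfrac{|h|}{2}\delta$; establish the strict sign of $G'$ outside the band $|G|<\delta$; deduce backward invariance of $\{G\ge\delta\}$; extract divergence and monotonicity from the comparison $\bigl(e^{-(h/2)s}(G-\delta)\bigr)'<0$; and finally let $\delta\to 0$ to recover $G\to 0$ in the remaining case (your quantifier order here --- $\delta$ first, then $s_\delta$, then ``for every $\delta$'' --- is handled correctly). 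The one flaw is the stated contradiction in the backward-invariance step. With $\alpha$ defined, as you define it, to be the largest point below $s_0$ where $G=\delta$, the observation that ``$G>\delta$ just to the left of $\alpha$'' contradicts nothing: points where $G<\delta$, and further crossings, may perfectly well exist farther to the left, and the maximality of $\alpha$ only constrains what happens to its right. The correct contradiction is on the other side: $G'(\alpha)<0$ forces $G<\delta$ immediately to the \emph{right} of $\alpha$, which is incompatible with $G\ge\delta$ on $[\alpha,s_0]$ --- a fact you had already recorded in that very sentence. (Alternatively, your ``left'' phrasing becomes correct if you instead set $\alpha=\sup\{s\le s_0: G(s)<\delta\}$, since then points with $G<\delta$ accumulate at $\alpha$ from the left while $G'(\alpha)<0$ forces $G>\delta$ on a left-neighbourhood of $\alpha$.) This is a two-word repair using ingredients already in place, not a structural gap; with it, the proof, including the monotonicity assertion, is complete.
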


\begin{lemma} \label{XYbounded} As $s \rightarrow -\infty$, $X_i/Y_i^2$ remains
bounded.
\end{lemma}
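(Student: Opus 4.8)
The plan is to apply Lemma \ref{FHKlemma} to the function $F := X_i/Y_i^2$, whose evolution is given precisely by equation (\ref{XY}). Writing that equation in the form $F^{\prime} = HF + K$, we have
\[
H = -1 - \sum_{j=1}^{r} X_j^2 + \frac{2X_i}{\sqrt{d_i}} + \frac{\epsilon}{2}W^2, \qquad
K = \frac{1}{\sqrt{d_i}} + \frac{\epsilon \sqrt{d_i}}{2}\frac{W^2}{Y_i^2}.
\]
First I would check that the hypotheses of Lemma \ref{FHKlemma} hold as $s \rightarrow -\infty$. Since the trajectory approaches the critical point (\ref{critpoint}), we have $\sum_j X_j^2 \rightarrow \beta^2$, $W \rightarrow 0$, and $X_i \rightarrow \beta$ or $0$ according as $i=1$ or $i>1$; a short computation then gives $H \rightarrow h$ with $h = \beta^2 - 1 < 0$ when $i=1$ and $h = -1-\beta^2 < 0$ when $i>1$. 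For $K$ the only non-constant term is $W^2/Y_i^2 = (W/Y_i)^2$, which by Corollary \ref{WYlim} tends to the finite limit $\mu_i^2$; hence $K \rightarrow k = \frac{1}{\sqrt{d_i}} + \frac{\epsilon\sqrt{d_i}}{2}\mu_i^2$, which is strictly positive, so in particular $k \neq 0$.

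Lemma \ref{FHKlemma} then forces $F$ to do exactly one of: converge to the finite value $-k/h$, or tend to $+\infty$, or tend to $-\infty$. In the first case $F$ is bounded, which is what we want. The case $F \rightarrow -\infty$ is immediately excluded because $F = X_i/Y_i^2 > 0$ by Lemma \ref{Xpos}. The remaining --- and only substantive --- possibility to rule out is $F \rightarrow +\infty$, and this is the step I expect to be the crux.

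To exclude $F \rightarrow +\infty$ I would argue by contradiction, extracting a growth statement for $X_i$ itself from equation (\ref{eqnX}). Dividing (\ref{eqnX}) by $X_i$ (legitimate since $X_i>0$) gives
\[
\frac{X_i^{\prime}}{X_i} = \left(\sum_{j=1}^{r} X_j^2 - 1\right) + \frac{1}{\sqrt{d_i}\,F} + \frac{\epsilon}{2}(\sqrt{d_i} - X_i)\frac{W^2}{X_i}.
\]
If $F \rightarrow +\infty$ then $1/F \rightarrow 0$, and moreover $\frac{W^2}{X_i} = (W/Y_i)^2 \cdot \frac{1}{F} \rightarrow \mu_i^2 \cdot 0 = 0$ using Corollary \ref{WYlim} again; since $X_i$ is bounded (Corollary \ref{bounds}), the last two terms vanish in the limit and $X_i^{\prime}/X_i \rightarrow \beta^2 - 1 < 0$. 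Integrating this from $s$ up to a fixed large negative $s_1$ shows $X_i(s) \rightarrow +\infty$ as $s \rightarrow -\infty$, contradicting the boundedness of $X_i$ (indeed the fact that $X_i \rightarrow \beta$ or $0$). Hence $F \rightarrow +\infty$ cannot occur, and $F = X_i/Y_i^2$ converges to the finite limit $-k/h$; in particular it remains bounded.

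A subtle but essential point throughout is that $K$ genuinely has a finite limit, which rests on the finiteness of $\mu_i = \lim_{s\rightarrow-\infty} W/Y_i$ from Corollary \ref{WYlim}; the same finiteness is what drives $W^2/X_i \rightarrow 0$ in the contradiction step. If $\mu_i$ were infinite the term $\frac{\epsilon\sqrt{d_i}}{2}W^2/Y_i^2$ could a priori blow up and Lemma \ref{FHKlemma} would not apply, so I would be careful to invoke Corollary \ref{WYlim} at both places. Apart from this, the argument reduces to routine evaluations of limits at the critical point.
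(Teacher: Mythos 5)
Your proof is correct, but it takes a genuinely different route from the paper's. The paper proves this lemma by a direct barrier argument: for $i=1$ the claim is immediate from the initial conditions (since $Y_1 \to \hat{\beta} \neq 0$), and for $i>1$ it exhibits an explicit constant upper bound for $X_i/Y_i^2$, namely $\bigl(\frac{1}{\sqrt{d_i}} + \frac{\epsilon}{2}(\sqrt{d_i}+\delta)(\mu_i+\delta)^2\bigr)/(1-\beta^2-\delta)$, valid on some interval $(-\infty, s^*]$; if this bound failed at some $s_0 \le s^*$, then by (\ref{eqnX}) and (\ref{eqnY}) one would have $X_i^{\prime}(s_0) < 0$ and $Y_i^{\prime}(s_0) > 0$, so the violation would persist on all of $(-\infty, s_0]$, forcing $X_i$ to be monotone there and hence bounded away from zero as $s \to -\infty$, contradicting $X_i \to 0$. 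You instead apply Lemma \ref{FHKlemma} to (\ref{XY}) --- which is precisely what the paper does one lemma later, in Lemma \ref{XYlimit}, except that there the infinite alternatives of the trichotomy are excluded by citing Lemma \ref{XYbounded} itself; since you cannot do that, you close the loop independently: $F \to -\infty$ is ruled out by positivity (Lemma \ref{Xpos}), and $F \to +\infty$ is ruled out by your integration argument showing $X_i^{\prime}/X_i \to \beta^2 - 1 < 0$, so that $X_i$ grows exponentially backwards in $s$, contradicting $X_i \to 0$ (or Corollary \ref{bounds}). There is no circularity, as your inputs (Lemma \ref{FHKlemma}, Corollary \ref{WYlim}, Lemma \ref{Xpos}, Corollary \ref{bounds}) all precede this lemma in the paper. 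The two arguments share the same underlying mechanism --- a large value of $X_i/Y_i^2$ forces $X_i^{\prime} < 0$, so $X_i$ grows as $s$ decreases, contradicting the boundary behaviour at the critical point --- but your packaging buys more: it delivers the convergence of $X_i/Y_i^2$ to $-k/h$, i.e., the content of Lemma \ref{XYlimit}, in the same stroke, whereas the paper's barrier argument gives a concrete quantitative bound on a whole interval without invoking the trichotomy at all.
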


\begin{proof}
If $i=1$ the claim follows immediately from our initial conditions.

Now let $i>1$. Since $\lim_{s \rightarrow -\infty} (\sum_{j=1}^{r} X_j^2)  = \beta^2,$
$\lim_{s \rightarrow -\infty} X_i = 0 = \lim_{s \rightarrow -\infty} W,$
and $\lim_{s \rightarrow -\infty} \frac{W}{Y_i} = \mu_i
< \infty$, for any $0 < \delta < 1 -\beta^2$ we can find $s^*$ so that
for $s \leq s^*$ we have
$$ \frac{Y_i^2}{\sqrt{d_i}} + \frac{\epsilon}{2}(\sqrt{d_i}-X_i) W^2 <
       \frac{Y_i^2}{\sqrt{d_i}} + \frac{\epsilon}{2}(\sqrt{d_i} + \delta)(\mu_i + \delta)^2
 Y_i^2$$
and $Y_i^{\prime} > 0$, and also $\sum_{j=1}^{r} X_j^2 -1 < \beta^2 -1 + \delta < 0$.

We claim that on $(-\infty, s^*]$ we must have
$$\frac{X_i}{Y_i^2} \leq  \frac{\frac{1}{\sqrt{d_i}} +
\frac{\epsilon}{2} (\sqrt{d_i}+ \delta)(\mu_i + \delta)^2}{(1- \beta^2 -\delta)}.$$
For if this fails at $s_0 \leq s^*,$ then
$$X_i(s_0) \left(\sum_{j=1}^r X_j^2(s_0) -1 \right) <
\frac{\frac{1}{\sqrt{d_i}} +
\frac{\epsilon}{2} (\sqrt{d_i}+ \delta)(\mu_i + \delta)^2}{(1- \beta^2 -\delta)}
Y_i(s_0)^2 (\beta^2 -1 + \delta).$$
Hence by (\ref{eqnX}) $X_i^{\prime}(s_0) < 0.$ Moreover, from above,
$Y_i^{\prime}(s_0)>0$. So in fact our lower bound on $X_i / Y_i^2$ holds
on the whole interval $(-\infty, s_0]$ and
it follows that $X_i$ is monotone decreasing on this interval.
 But this
contradicts the fact that $X_i$ tends to zero as $s$ tends to
$-\infty$ and our claim is established.
\end{proof}

\begin{lemma} \label{XYlimit}
For $i > 1$ we have $\lim_{s \rightarrow -\infty} \frac{X_i}{ Y_i^2}
= \frac{\frac{1}{\sqrt{d_i}} + \frac{\epsilon}{2}\sqrt{d_i} \mu_i^2}{1+\beta^2}.$
For $i=1$, the corresponding limit is $\beta/(1-\beta^2)$.
\end{lemma}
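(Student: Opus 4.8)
The plan is to read the limit directly off the differential equation (\ref{XY}) for $F := X_i/Y_i^2$, which is already in the form $F^{\prime} = HF + K$ handled by Lemma \ref{FHKlemma}. First I would identify
\[
H = -1 - \sum_{j=1}^{r} X_j^2 + \frac{2 X_i}{\sqrt{d_i}} + \frac{\epsilon}{2} W^2, \qquad K = \frac{1}{\sqrt{d_i}} + \frac{\epsilon \sqrt{d_i}}{2} \frac{W^2}{Y_i^2},
\]
and compute their limits as $s \rightarrow -\infty$ using the behaviour of the flow near the critical point (\ref{critpoint}): namely $\sum_j X_j^2 \rightarrow \beta^2$, $W \rightarrow 0$, and $W/Y_i \rightarrow \mu_i$ from Corollary \ref{WYlim}.

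For $i > 1$ one has $X_i \rightarrow 0$, so $H \rightarrow h := -(1+\beta^2)$ and $K \rightarrow k := \frac{1}{\sqrt{d_i}} + \frac{\epsilon \sqrt{d_i}}{2}\mu_i^2$. For $i=1$ one uses instead $X_1 \rightarrow \beta$ together with $\beta = 1/\sqrt{d_1}$, so that the cross term contributes $\frac{2 X_1}{\sqrt{d_1}} \rightarrow 2\beta^2$ and hence $H \rightarrow h := -(1-\beta^2)$; since $\mu_1 = 0$, the second term of $K$ drops out and $K \rightarrow k := 1/\sqrt{d_1} = \beta$. In both cases $h < 0$ and $k > 0$ (for $i > 1$ because both summands in $k$ are nonnegative and the first is strictly positive), so the hypotheses $h < 0$, $k \neq 0$ of Lemma \ref{FHKlemma} are satisfied.

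Lemma \ref{FHKlemma} then forces $F$ either to converge to $-k/h$ or to tend to $\pm\infty$. The latter possibility is exactly what Lemma \ref{XYbounded} rules out, since it asserts that $X_i/Y_i^2$ stays bounded as $s \rightarrow -\infty$. Hence $F \rightarrow -k/h$, and substituting the values of $h$ and $k$ gives $\frac{(1/\sqrt{d_i}) + (\epsilon/2)\sqrt{d_i}\,\mu_i^2}{1+\beta^2}$ for $i > 1$ and $\beta/(1-\beta^2)$ for $i=1$, as claimed.

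The computation is essentially mechanical, and there is no genuine obstacle beyond keeping track of the two different limiting values of $H$ in the $i = 1$ versus $i > 1$ cases, where the simplification for $i=1$ rests on the normalization $\beta^2 = 1/d_1$. The only place where prior theory is indispensable is in excluding the divergent alternative in Lemma \ref{FHKlemma}: this is precisely the role of the boundedness established in Lemma \ref{XYbounded}, without which the equation alone would not pin down the limit.
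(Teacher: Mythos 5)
Your proof is correct and follows essentially the same route as the paper: identify equation (\ref{XY}) as an instance of (\ref{eqnFHK}), compute the limiting values $h$ and $k$ near the critical point, and invoke Lemma \ref{XYbounded} to exclude the divergent alternative in Lemma \ref{FHKlemma}. The only (harmless) difference is in the $i=1$ case, where the paper records just the $i>1$ constants because the limit $\beta/(1-\beta^2)$ already follows directly from the initial conditions $X_1 \rightarrow \beta$, $Y_1 \rightarrow \hat{\beta}$, whereas you rerun the FHK argument with the corrected values $h = -(1-\beta^2)$, $k = \beta$, which works equally well.
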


\begin{proof}
  The differential equation (\ref{XY}) is of the form
  (\ref{eqnFHK}), with $h = -(1 + \beta^2)$ and $k =
  \frac{1}{\sqrt{d_i}}+ \frac{\epsilon}{2}\sqrt{d_i} \mu_i^2$.
  The desired result now follows from Lemmas
  \ref{FHKlemma} and \ref{XYbounded}.
\end{proof}

\begin{prop} \label{limit-g1}
As $t$ tends to $0$ we have the following limiting values:
\[
g_1 (0) =0 \;\; : \;\; \dot{g}_{1}(0) = 1 \;\; :  \;\; \dot{g}_{i}(0) =0  \; (i>1).
\]
\end{prop}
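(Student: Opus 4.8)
The plan is to reduce everything to the phase variables $X_i,Y_i,W$ via the recovery relations (\ref{def-tgi}) and (\ref{logdiff-gi}) and then simply read off the three limits as $s\to-\infty$. Combining $g_i = W\sqrt{d_i\lambda_i}/Y_i$ with $\dot g_i/g_i = W^{-1}X_i/\sqrt{d_i}$ yields the clean formulas
\[
g_i = \sqrt{d_i\lambda_i}\,\frac{W}{Y_i}, \qquad \dot g_i = \frac{\sqrt{\lambda_i}\,X_i}{Y_i}.
\]
Thus $g_i$ is governed by the monotone ratio $W/Y_i$ of Corollary \ref{WYlim}, while $\dot g_i$ is controlled by $X_i/Y_i$. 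Along $\gamma$ we have, as $s\to-\infty$, the critical-point limits $X_1\to\beta=1/\sqrt{d_1}$, $Y_1\to\hat\beta=\sqrt{1-\beta^2}$, and $W,X_i,Y_i\to 0$ for $i>1$; these are the inputs for all three computations.

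For $g_1(0)$ I would note that since $Y_1\to\hat\beta>0$ while $W\to0$, the ratio $W/Y_1\to 0$, i.e.\ $\mu_1=0$, so $g_1=\sqrt{d_1\lambda_1}\,(W/Y_1)\to 0$. For $\dot g_1(0)$ I would substitute the critical-point limits into the displayed formula, invoking the normalisation $\lambda_1=d_1-1$: since $\hat\beta^2 = 1-\beta^2 = (d_1-1)/d_1 = \lambda_1\beta^2$, we have $\sqrt{\lambda_1}\,\beta=\hat\beta$, and therefore $\dot g_1\to\sqrt{\lambda_1}\,\beta/\hat\beta = 1$. Both of these are immediate substitutions once the formulas above are in hand.

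The only genuinely indeterminate case is $\dot g_i(0)$ for $i>1$, where $X_i/Y_i$ is a $0/0$ expression as $s\to-\infty$. Here the plan is to write $X_i/Y_i = Y_i\cdot(X_i/Y_i^2)$ and invoke Lemma \ref{XYlimit}, which gives $X_i/Y_i^2$ the finite limit $\bigl(\tfrac{1}{\sqrt{d_i}}+\tfrac{\epsilon}{2}\sqrt{d_i}\,\mu_i^2\bigr)/(1+\beta^2)$. Since $Y_i\to0$, the product tends to $0$, whence $\dot g_i=\sqrt{\lambda_i}\,(X_i/Y_i)\to 0$. I expect this to be the main (indeed the only) point requiring the earlier analysis: it rests on Lemma \ref{XYlimit}, which was itself obtained from Lemma \ref{FHKlemma} together with the boundedness Lemma \ref{XYbounded}. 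Everything else is direct substitution of the critical-point limits.

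Finally, to justify phrasing the limits ``as $t$ tends to $0$'', I would note that the $W$-eigenvalue of the linearisation at the critical point (\ref{critpoint}) is $2\beta^2>0$, so $W\sim e^{2\beta^2 s}$ as $s\to-\infty$; hence $t=\int W\,ds$ converges to a finite value, which we normalise to $0$. With this identification the three computations above verify precisely the boundary conditions (\ref{bdy0}) and (\ref{bdy1}).
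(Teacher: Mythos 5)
Your proof is correct and follows essentially the same route as the paper: $g_1(0)=0$ from $W/Y_1\to 0$ in (\ref{def-tgi}), $\dot g_1(0)=1$ by substituting $X_1\to\beta$, $Y_1\to\hat\beta$ into $\dot g_i=\sqrt{\lambda_i}\,X_i/Y_i$ using $\lambda_1=d_1-1$, and $\dot g_i(0)=0$ ($i>1$) via the factorisation $X_i/Y_i=Y_i\cdot(X_i/Y_i^2)$. The only (harmless) difference is that the paper gets the third limit from the mere boundedness of $X_i/Y_i^2$ (Lemma \ref{XYbounded}), whereas you invoke the sharper Lemma \ref{XYlimit}, which rests on that same boundedness lemma anyway. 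One slip in your closing remark: the $W$-eigenvalue of the linearisation at (\ref{critpoint}) is $\beta^2$, not $2\beta^2$ (the eigenvalue $2\beta^2$ belongs to the $X_1,Y_1$ block), so in fact $W\sim e^{\beta^2 s}$ as $s\to-\infty$ (cf.\ Lemma \ref{lim-W}); since this is still exponential decay, your conclusion that $t=\int W\,ds$ tends to a finite limit, normalisable to $0$, is unaffected.
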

\begin{proof}
The first limit follows from (\ref{def-tgi}) and the fact that
$\lim_{s \rightarrow -\infty} \frac{W}{Y_1} = 0$.
Since
\begin{equation} \label{eqn-diffgi}
\dot{g_i} = \sqrt{\lambda_i} (X_i/Y_i),
\end{equation}
the second limit follows from the initial values of $X_1$ and $Y_1$
(recall that $\lambda_1 = d_1 - 1$).
The last limit follows from the above formula, Lemma \ref{XYbounded}
and $\lim_{s \rightarrow -\infty} Y_i = 0$ for $i>1$.
\end{proof}

\begin{prop} \label{gifinite}
For $i > 1$, $\mu_i := \lim_{s \rightarrow -\infty} \frac{W}{Y_i}$ and
$g_i(0)=\sqrt{\lambda_i d_i}\, \mu_i$ are finite and positive.
\end{prop}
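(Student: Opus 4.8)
The plan is to dispose of the easy assertions first and then focus on the one substantive point. Finiteness of $\mu_i$ is already in hand, since Corollary \ref{WYlim} gives $\mu_i \in [0,\infty)$. The identity $g_i(0) = \sqrt{\lambda_i d_i}\,\mu_i$ is equally immediate from (\ref{def-tgi}), which yields $g_i = \sqrt{d_i\lambda_i}\,(W/Y_i)$ and hence $g_i(0) = \lim_{t\to 0} g_i = \sqrt{d_i\lambda_i}\,\lim_{s\to-\infty}(W/Y_i) = \sqrt{d_i\lambda_i}\,\mu_i$. So both quantities are finite, and once we know $\mu_i > 0$ for $i>1$ they are positive too. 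The whole proposition thus reduces to proving $\mu_i > 0$.

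Since $W/Y_i$ increases monotonically to $\mu_i$ as $s\to-\infty$, the only way positivity can fail is if $W/Y_i$ decreases to $0$. To rule this out I would integrate (\ref{WY}) in logarithmic form: from $(\log(W/Y_i))' = X_i/\sqrt{d_i}$ one gets, for a fixed $s_0$,
\[
\log\frac{(W/Y_i)(s_0)}{(W/Y_i)(s)} = \int_{s}^{s_0} \frac{X_i(\tau)}{\sqrt{d_i}}\, d\tau .
\]
Letting $s\to-\infty$, the right-hand side is nonnegative by Lemma \ref{Xpos}, and $\mu_i > 0$ holds precisely when the improper integral $\int_{-\infty}^{s_0} X_i\, d\tau$ converges. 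Everything therefore comes down to an integrability estimate for $X_i$ near $s=-\infty$.

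To secure this I would combine the exponential decay of $Y_i$ with the quadratic smallness of $X_i$ relative to $Y_i$. From (\ref{eqnY}) the logarithmic derivative $(\log Y_i)' = \sum_{j=1}^r X_j^2 - X_i/\sqrt{d_i} - \tfrac{\epsilon}{2}W^2$ tends to $\beta^2 > 0$ as $s\to-\infty$, using $\sum_{j=1}^r X_j^2 \to \beta^2$ and $X_i, W \to 0$. Hence there are $s^* \le s_0$ and $\delta \in (0,\beta^2)$ with $(\log Y_i)' \ge \beta^2 - \delta$ on $(-\infty, s^*]$, and integrating this inequality gives $Y_i(s)^2 \le Y_i(s^*)^2\, e^{2(\beta^2-\delta)(s-s^*)}$ there, which is integrable over $(-\infty,s^*]$ because $\beta^2 - \delta > 0$. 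Since Lemma \ref{XYbounded} furnishes a constant $M$ with $X_i \le M Y_i^2$ for $s \le s^*$, I conclude $\int_{-\infty}^{s^*} X_i\, d\tau \le M\int_{-\infty}^{s^*} Y_i^2\, d\tau < \infty$, whence $\mu_i > 0$.

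The main obstacle is exactly this last estimate: it is not enough that $X_i \to 0$; one must pin down its rate of decay well enough to make it integrable. The mechanism that makes it work is that $X_i$ is quadratically small compared with $Y_i$, while $Y_i$ decays exponentially because its logarithmic derivative has the strictly positive limit $\beta^2$; these two facts together deliver the summability that forces $\mu_i$ to be positive.
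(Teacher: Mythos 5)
Your proof is correct and takes essentially the same approach as the paper: both reduce everything to showing $\int^{s^*}_{-\infty} X_i\,ds < \infty$, and both get this from Lemma \ref{XYbounded} ($X_i \leq M Y_i^2$ near $s=-\infty$) combined with the lower bound $\sum_{j} X_j^2 - X_i/\sqrt{d_i} - \tfrac{\epsilon}{2}W^2 > \beta^2 - \delta$ coming from (\ref{eqnY}). The only difference is bookkeeping: the paper writes $g_i'/g_i = \tfrac{1}{2\sqrt{d_i}}\tfrac{X_i}{Y_i^2}\,(Y_i^2)'/\bigl(\sum_j X_j^2 - X_i/\sqrt{d_i} - \tfrac{\epsilon}{2}W^2\bigr)$ and integrates the exact derivative $(Y_i^2)'$, whereas you first convert the same coefficient bound into exponential decay of $Y_i^2$ and then integrate that.
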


\begin{proof}
Finiteness follows from Cor (\ref{WYlim});
it remains to prove positivity. Using (\ref{eqn-diffgi}) and
(\ref{eqnY}) we see that
\[
\frac{g_i^{\prime}}{g_i} =  \frac{X_i}{\sqrt{d_i}}
=\frac{1}{2 \sqrt{d_i}} \frac{X_i}{Y_i^2}
\frac{ (Y_i^2)^{\prime}}{(\sum_{j=1}^{r} X_j^2) - \frac{X_i}{\sqrt{d_i}} -\frac{\epsilon}{2} W^2}.
\]
Integrating this over an interval $(s, s^*)$ using the upper bound for
$X_i/Y_i^2$ in the proof of Lemma \ref{XYbounded} and the lower bound
$\sum_{j=1}^r X_j^2 - \frac{X_i}{\sqrt{d_i}} - \frac{\epsilon}{2} W^2  > \beta^2 - \delta$
we get a bound
\[
\frac{g_i(s^*)}{g_i(s)} \leq \exp (C_i(Y_i^2(s^*)- Y_i^2(s)) )
\]
where $C_i$ is a positive constant. This gives the desired positive
lower bound for $g_i(s)$.
\end{proof}

\begin{lemma} \label{lim-W}
As $s$ tends to $-\infty$, $e^{-\beta^2 s}W $ tends to a finite positive
limit.
\end{lemma}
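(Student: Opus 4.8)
The plan is to take logarithms and reduce the assertion to an integrability statement. Since $W>0$, set $P(s)=\log W-\beta^2 s$; then by (\ref{eqnW}) we have $P'=\sum_{j=1}^r X_j^2-\tfrac{\epsilon}{2}W^2-\beta^2$, where $\beta^2=1/d_1$. Because $\sum_j X_j^2\to\beta^2$ and $W\to 0$ as $s\to-\infty$, the integrand $P'$ tends to $0$; but to obtain a finite limit for $P$ I must show that $P'$ is \emph{integrable} on some interval $(-\infty,s_1]$. Granting this, $P(s)$ converges to a finite limit $\ell$, and exponentiating gives $e^{-\beta^2 s}W\to e^{\ell}\in(0,\infty)$, which is exactly the claim. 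So everything comes down to the integrability near $-\infty$ of the three pieces $\tfrac{\epsilon}{2}W^2$, $\sum_{i>1}X_i^2$, and $X_1^2-\beta^2$.

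The first two pieces are straightforward. From (\ref{eqnW}) the logarithmic derivative $(\log W)'=\sum_j X_j^2-\tfrac{\epsilon}{2}W^2$ tends to $\beta^2>0$, so it exceeds $\beta^2/2$ for $s$ sufficiently negative; integrating shows $W\le C e^{(\beta^2/2)s}$ there, whence $W^2$ is integrable. The same argument applied to (\ref{eqnY}) gives, for $i>1$, $(\log Y_i)'\to\beta^2>0$ and hence $Y_i\le Ce^{(\beta^2/2)s}$; then Lemma \ref{XYbounded} (boundedness of $X_i/Y_i^2$) yields $X_i=O(Y_i^2)=O(e^{\beta^2 s})$, so $\sum_{i>1}X_i^2$ (indeed $\sum_{i>1}X_i$ itself) is integrable.

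The remaining term $X_1^2-\beta^2=(X_1-\beta)(X_1+\beta)$ is the crux, and here I expect the main difficulty, since $X_1\to\beta\neq 0$ means there is no simple logarithmic-derivative bound. As $X_1+\beta$ is bounded, it suffices to show $X_1-\beta$ is integrable; and since $\Ha-1=\sqrt{d_1}(X_1-\beta)+\sum_{i>1}\sqrt{d_i}X_i$ with the second sum already integrable, it is enough to prove $\Ha-1$ is integrable. I would do this through the coupled system (\ref{eqnHQ1})--(\ref{eqnHQ2}), writing $a=1-\Ha>0$ and $q=-\Qa>0$ (both positive by Proposition \ref{HQbounds}, both tending to $0$). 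From (\ref{eqnQ}) one gets $q'=g_1 q+\epsilon W^2 a$ with $g_1=2(\sum_j X_j^2-\tfrac{\epsilon}{2}W^2)\to 2\beta^2$ and nonnegative forcing; hence $(\log q)'\ge g_1\ge\beta^2$ for large negative $s$, so $q\le Ce^{\beta^2 s}$ is integrable. From (\ref{eqnH}) one gets $a'=f_1 a+q$ with $f_1=\sum_j X_j^2-1-\tfrac{\epsilon}{2}W^2\to\beta^2-1<0$; choosing $\kappa>0$ with $f_1\le-\kappa$ for $s\le s_3$ gives $\kappa a\le q-a'$, and integrating over $[s,s_3]$ while letting $s\to-\infty$ (using $a\to 0$ and the integrability of $q$) yields $\kappa\int_{-\infty}^{s_3}a\le\int_{-\infty}^{s_3}q-a(s_3)<\infty$. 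Thus $a=-(\Ha-1)$ is integrable, completing the chain.

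Assembling the three pieces shows $P'$ is integrable on $(-\infty,s_1]$, so $P$ has a finite limit and $e^{-\beta^2 s}W$ tends to a finite positive limit. The only delicate point is the estimate for $X_1-\beta$: the clean device is that the sign $f_1\to\beta^2-1<0$ (which uses $d_1>1$) lets one bound $\int a$ by $\int q$ via the elementary inequality $\kappa a\le q-a'$, so that no sharp asymptotics of $\Ha-1$ are actually required.
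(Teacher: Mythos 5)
Your proof is correct, and although its outer frame coincides with the paper's --- reduce the claim to integrability of $(\log W-\beta^2 s)'=\sum_j X_j^2-\beta^2-\tfrac{\epsilon}{2}W^2$ near $s=-\infty$, with $W$ and the $Y_i$ ($i>1$) controlled by logarithmic-derivative bounds from (\ref{eqnW}), (\ref{eqnY}) and with $X_i=O(Y_i^2)$ supplied by Lemma \ref{XYbounded} --- you handle the crux, the term $X_1^2-\beta^2$, by a genuinely different mechanism. The paper instead works with $F=e^{-\beta^2 s}W$ directly: it first shows $F$ is monotone for large negative $s$ (using $X_1<\beta$ from Prop \ref{HQbounds}), so the limit exists in $(0,\infty]$, and then bounds $F$ above by integrating $(\log F)'$ against pointwise exponential estimates, among them $|X_1-\beta|\leq A_1 e^{2(\beta^2-\delta)s}$, which it asserts can be obtained ``similarly''. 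That particular estimate is the one step that does \emph{not} follow from a logarithmic-derivative argument, since $X_1-\beta$ does not factor out of its own derivative; it requires either unstable-manifold asymptotics or a variation-of-parameters argument in the $(\Ha,\Qa)$ system. You replace it by exactly such an argument, but in a weaker and fully elementary form: from (\ref{eqnQ}) the positivity of the forcing $\epsilon W^2 a$ (Prop \ref{HQbounds}) gives $(\log q)'\geq g_1$ and hence an honest exponential bound $q\leq Ce^{\beta^2 s}$ for $q=-\Qa$, and then the inequality $\kappa a\leq q-a'$ coming from (\ref{eqnH}) with $f_1\to\beta^2-1<0$ yields integrability of $a=1-\Ha$, hence of $X_1-\beta$, with no pointwise rate ever claimed. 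This buys completeness precisely where the paper is sketchiest, and it also lets you dispense with the monotonicity step altogether, since integrability of $P'$ alone forces $P=\log W-\beta^2 s$ to converge to a finite limit, giving finiteness and positivity of $\lim e^{-\beta^2 s}W$ in one stroke. What the paper's route buys in exchange is sharper information --- decay of $X_1-\beta$ and $\Qa$ at a rate close to $e^{2\beta^2 s}$ rather than mere integrability --- but none of that extra precision is needed for this lemma.
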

\begin{proof}
Letting $F = e^{-\beta^2 s} W$,  we see that
\[
F^{\prime} = F \left( \sum_{j=1}^{r} X_j^2  - \beta^2 -\frac{\epsilon}{2} W^2 \right).
\]
Near $s=-\infty$, the  dominant terms in the bracket are $-\frac{\epsilon}{2} W^2$
and
$(X_1+ \beta) (X_1 - \beta) \approx 2\beta (X_1 -\beta),$
which is negative since $\Ha < 1$ (Prop \ref{HQbounds}) implies that
$X_1 < 1/\sqrt{d_1}= \beta$.
So $F$ is monotonic decreasing for large
negative $s$.

The proposition is proved if we can show that $F$ is bounded
from above near $s=-\infty$.

For a sufficiently small $\delta> 0$ ($\delta < \frac{2}{3}\beta^2$
suffices for our purposes), we have $\sum_{j=1}^r X_j^2 -
\frac{\epsilon}{2} W^2 > \beta^2 - \delta$ on some interval $(-\infty,
s^*]$. Eqn.(\ref{eqnW}) then gives the inequality $(W^2)^{\prime} \geq
2(\beta^2 - \delta) W^2$ and we deduce that
$$|W| \leq A_0 \, e^{(\beta^2 - \delta)s}$$
on such an interval. By choosing a smaller $s^*$ if necessary, we can similarly
ensure that
$$ |X_1 - \beta| \leq A_1 \, e^{2(\beta^2 -\delta)s}, \ \ \ \ \ \
|X_1 + \beta| \leq 2\beta + \delta, $$
$$  |X_j(s)| \leq A_j \, e^{2(\beta^2 -\delta)s} \, \, \, j \geq 2, $$
where $A_j, \, 0 \leq j \leq r$ are appropriate positive constants.
If we now integrate the equation for $F$ over $[s, s^*]$ and apply the
above estimates, we obtain an upper bound for $F(s)$.
\end{proof}

\begin{prop} \label{lim-u}
$u$ tends to a finite limit as $t$ tends to $0$.
\end{prop}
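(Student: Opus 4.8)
The plan is to reduce the statement to an integrability assertion about $\Ha-1$. Recall from Remark \ref{einstein} that along our trajectory
\[
u' = W\dot u = W\,\tr(L) - 1 = \Ha - 1 ,
\]
using (\ref{def-u}) and (\ref{logdiff-gi}), where the prime is $d/ds$. Since $t\to 0$ precisely as $s\to-\infty$ (the distance $\int_{-\infty}^{s^*}W\,ds$ is finite because $W$ decays exponentially, by Lemma \ref{lim-W}), it suffices to prove that $u$, regarded as a function of $s$, converges as $s\to-\infty$. Writing $u(s)=u(s^*)-\int_s^{s^*}(\Ha-1)\,d\tilde s$, this amounts to showing that $\Ha-1$ is absolutely integrable on $(-\infty,s^*]$.

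First I would express $\Ha-1$ in terms of quantities already controlled near the critical point (\ref{critpoint}). Using $\sqrt{d_1}\,\beta=1$ we have
\[
\Ha - 1 = \sqrt{d_1}\,(X_1 - \beta) + \sum_{j>1}\sqrt{d_j}\,X_j .
\]
Next I would invoke the exponential estimates already established inside the proof of Lemma \ref{lim-W}: for all sufficiently small $\delta>0$ there are an $s^*$ and positive constants $A_1,\dots,A_r$ with $|X_1-\beta|\le A_1 e^{2(\beta^2-\delta)s}$ and $|X_j|\le A_j e^{2(\beta^2-\delta)s}$ for $j\ge 2$ on $(-\infty,s^*]$. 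Combining these yields $|\Ha-1|\le C\,e^{2(\beta^2-\delta)s}$ for a single constant $C$. Choosing $\delta<\beta^2$ makes the exponent $2(\beta^2-\delta)$ positive, so $\Ha-1$ is dominated by an integrable exponential near $-\infty$, the integral $\int_{-\infty}^{s^*}(\Ha-1)\,d\tilde s$ converges, and hence $u(s)$ tends to a finite limit as $s\to-\infty$, i.e.\ $u$ tends to a finite limit as $t\to 0$.

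The hard part is genuinely the \emph{rate} of decay of $\Ha-1$: one must see that it is integrable, not merely $o(1)$. The reason is structural. The quantity $\Ha$ depends only on the $X_i$, while the slow unstable modes at (\ref{critpoint}) (eigenvalue $\beta^2$) live in the $W$ and $Y_j\,(j>1)$ directions and the $X_j\,(j>1)$ directions are stable; consequently $X_1-\beta$ and the $X_j\,(j>1)$ are slaved to the unstable modes and decay at the doubled rate $2\beta^2$, which is exactly what Lemma \ref{lim-W} records. The same rate can be read off the coupled system (\ref{eqnH})--(\ref{eqnQ}): near $s=-\infty$ the coefficient of $\Qa$ in $\Qa'$ tends to $2\beta^2$ while the cross term $\epsilon W^2(\Ha-1)$ is of higher order, forcing $\Qa=O(e^{2(\beta^2-\delta)s})$, and feeding this into $(\Ha-1)'=f_1(\Ha-1)+\Qa$ (whose homogeneous rate $f_1\to\beta^2-1<0$ would blow up as $s\to-\infty$) singles out the decaying particular solution with the same bound. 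As a consistency check, the conservation law (\ref{cons}) can be written $\Qa=W^2(C+\epsilon u)$, so that $u=(\Qa/W^2-C)/\epsilon$ and, since $\Qa$ and $W^2$ both decay like $e^{2\beta^2 s}$, the finiteness of $\lim u$ is equivalent to $\Qa/W^2$ having a limit; this is precisely the integrability established above, as $(\Qa/W^2)'=\epsilon(\Ha-1)$.
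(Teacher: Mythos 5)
Your proof is correct, but it follows a genuinely different route from the paper's. The paper integrates (\ref{def-u}) in closed form: since $\tr(L)\,dt = d\bigl(\sum_i d_i\log g_i\bigr)$ and $dt/W = ds$, one gets $e^{u} = \mathrm{const}\cdot\prod_i g_i^{d_i}\,e^{-s}$, and because $d_1\beta^2=1$ this equals $\mathrm{const}\cdot\prod_{i\geq 2} g_i^{d_i}\,\bigl(e^{-\beta^2 s}g_1\bigr)^{d_1}$; the divergences of $d_1\log g_1$ and $-s$ cancel exactly, and the remaining factors have finite positive limits by Prop \ref{gifinite} and by Lemma \ref{lim-W} (via $e^{-\beta^2 s}g_1 = \sqrt{d_1\lambda_1}\,(W/Y_1)\,e^{-\beta^2 s}$ and $Y_1\to\hat\beta$). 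So the paper needs only the \emph{existence} of those limits, and never a decay rate for $\Ha-1$. You instead prove the quantitative statement that $u'=\Ha-1$ decays at the doubled rate $e^{2(\beta^2-\delta)s}$ and integrate it; your structural explanation (the $X$-deviations are quadratically slaved to the slow unstable modes) is indeed the correct reason the doubled rate holds. The trade-offs: your argument bypasses Prop \ref{gifinite} entirely and makes the underlying mechanism transparent, but it leans on the interior estimates $|X_1-\beta|,\ |X_j|\leq A\,e^{2(\beta^2-\delta)s}$, which the paper asserts only in passing (``we can similarly ensure'') inside the proof of Lemma \ref{lim-W}; note that for $X_1-\beta$ this is not a one-line scalar comparison, since $(X_1,Y_1)$ couple through a $2\times 2$ block with eigenvalues $2\beta^2$ and $\beta^2-1$, so a careful write-up of your route would have to supply that step (your alternative derivation via (\ref{eqnH})--(\ref{eqnQ}) is a clean way to do so). Since those same estimates are what drive Lemma \ref{lim-W} itself, both proofs ultimately rest on the same analytic input; the paper's exact integration is the packaging that lets it quote only the statements, rather than the proofs, of earlier results.
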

\begin{proof} By integrating (\ref{def-u}) we obtain
\begin{equation} \label{u0}
e^{u(0)} = ({\rm positive \; constant })
\prod_{i=2}^{r} \, g_i(0)^{d_i} \left(\lim_{s \rightarrow
-\infty} e^{- \beta^2 s} g_1 \right)^{d_1}.
\end{equation}
The last limit, by (\ref{def-tgi}), is
$\lim_{s \rightarrow -\infty} \left( \sqrt{d_1 \lambda_1}(W/Y_1) e^{-\beta^2 s} \right),$
which is finite and positive by Lemma \ref{lim-W}.
\end{proof}

We shall next record some formulas that will be useful when we
study the limiting values of the second and third derivatives of
$g_i$. They are obtained from (\ref{eqn-diffgi}) by straightforward
computation.

\begin{equation} \label{gidotdot}
\ddot{g_i} = \frac{\lambda_i}{g_i Y_i^2} \left(X_i^2 + Y_i^2
    -\sqrt{d_i}X_i + \frac{\epsilon}{2}\, d_i W^2 \right)
\end{equation}
\begin{equation} \label{dddotgi}
\frac{d^3 g_i}{dt^3} = \frac{\lambda_i}{g_i Y_i^2 W}\left(
    \frac{1}{\sqrt{d_i}}\, X_i (X_i^2 + Y_i^2) -3X_i^2 - Y_i^2 +
    \sqrt{d_i}X_i \left(1+ \sum_{j=1}^r \, X_j^2 \right)
     + \frac{\epsilon}{2}W^2 (2\sqrt{d_i} X_i -d_i) \right)
\end{equation}

We also need to obtain certain limits before
proceeding further with smoothness considerations.

\begin{lemma} \label{rho}
The limit $\rho:= \lim_{s \rightarrow -\infty} \frac{Y_1^2 + \sum_{j=1}^r X_j^2 -1}{W^2}$
exists and is finite.
\end{lemma}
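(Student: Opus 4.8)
The plan is to avoid analyzing the numerator directly and instead reduce everything to quantities whose limiting behaviour has already been established. The key observation is that the numerator is essentially the Lyapunov-type quantity $\Ly$, which the conservation law controls completely. Writing $G:=\sum_{j=1}^r X_j^2$, the numerator $Y_1^2 + G - 1$ differs from $\Ly = G + \sum_{j=1}^r Y_j^2 - 1$ only by the terms $Y_j^2$ with $j\geq 2$; precisely,
\[
Y_1^2 + \sum_{j=1}^r X_j^2 - 1 = \Ly - \sum_{j=2}^r Y_j^2 .
\]
Dividing by $W^2$, it therefore suffices to show that each of $\Ly/W^2$ and $Y_j^2/W^2$ (for $j\geq 2$) tends to a finite limit as $s\to-\infty$.

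For the first, I would invoke the conservation law (\ref{cons}) directly: rearranging it gives $\Ly/W^2 = C + \epsilon\bigl(u - \tfrac{n-1}{2}\bigr)$. By Proposition \ref{lim-u} the potential $u$ tends to the finite value $u(0)$ as $t\to 0$, equivalently as $s\to-\infty$, so $\Ly/W^2 \to C + \epsilon\bigl(u(0)-\tfrac{n-1}{2}\bigr)$, a finite number. For the second, recall from Corollary \ref{WYlim} and Proposition \ref{gifinite} that for each $i>1$ the ratio $W/Y_i$ increases to a \emph{finite, positive} limit $\mu_i$. Hence $Y_i/W \to \mu_i^{-1}$ and $Y_i^2/W^2 \to \mu_i^{-2}$, which is finite. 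Combining the two computations, the limit $\rho$ exists and equals $C + \epsilon\bigl(u(0)-\tfrac{n-1}{2}\bigr) - \sum_{j=2}^r \mu_j^{-2}$.

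The real content of this argument is the initial reduction to $\Ly/W^2$; once that is recognized, the conservation law does all the work and no delicate flow analysis is needed. The only point requiring care is that the terms $\mu_j^{-2}$ be finite, which is exactly guaranteed by the positivity of the $\mu_j$ from Proposition \ref{gifinite}; were any $\mu_j$ zero, the corresponding term would blow up. I therefore expect no genuine obstacle here, since the hard estimates near $s=-\infty$ have already been carried out in the preceding lemmas. A more computational alternative would be to derive the differential equation satisfied by $(Y_1^2 + G - 1)/W^2$ and apply Lemma \ref{FHKlemma} as in the proofs of Lemmas \ref{XYlimit} and \ref{lim-W}, but this route requires substantially more algebra and is less transparent than exploiting the conservation law.
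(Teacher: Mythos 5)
Your proposal is correct and is essentially identical to the paper's own proof: the authors make the same decomposition $Y_1^2 + \sum_j X_j^2 - 1 = \Ly - \sum_{j\geq 2} Y_j^2$, divide the conservation law (\ref{cons}) by $W^2$, and invoke Propositions \ref{gifinite} and \ref{lim-u} for the finiteness of $\lim (Y_j/W)^2$ and $\lim u$. The only detail the paper adds beyond your argument is the observation that the limit $\rho$ is in fact negative, since $\Ly<0$ along the chosen trajectories.
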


\begin{proof}
By the conservation law (\ref{cons}), we have
\begin{eqnarray*}
\frac{Y_1^2 + \sum_{j=1}^r X_j^2 -1}{W^2} &=&
   \frac{-\sum_{j=2}^r Y_j^2 + \epsilon u W^2 + (C -(n-1)\frac{\epsilon}{2}) W^2}{W^2} \\
   &=& -\sum_{j=2}^r \left(\frac{Y_j}{W} \right)^2 + \epsilon u + C-(n-1)\frac{\epsilon}{2}.
\end{eqnarray*}
The last expression has a finite limit as $s$ approaches $-\infty$ by
Props \ref{gifinite} and \ref{lim-u} and our choice of trajectory.
Note that the limit is negative since for our trajectories ${\mathcal L} <0$ so
$Y_1^2 + \sum_{j=1}^{r} X_j^2 -1 <0$.
\end{proof}

\begin{lemma} \label{XbWfin}
The quantity $\frac{X_1 - \beta}{W^2}$ cannot tend to $-\infty$
as $s$ tends to $-\infty$.
\end{lemma}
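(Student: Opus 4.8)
The plan is to track the ratio $P := (X_1 - \beta)/W^2$ directly. Differentiating $P$ and substituting (\ref{eqnX}) and (\ref{eqnW}) yields an equation of the form
\[
P^{\prime} = H\,P + K, \qquad H = -\Big(Y_1^2 + 2\sum_{j=1}^{r} X_j^2 - \epsilon W^2\Big),
\]
where, as $s \to -\infty$, one has $H \to -(\hat\beta^2 + 2\beta^2) = -(1+\beta^2)$, while $K$ tends to a finite limit: the only term of $K$ that is not manifestly convergent is a multiple of $(Y_1^2 + \sum_j X_j^2 - 1)/W^2$, which converges by Lemma \ref{rho}. Since $\Ha < 1$ (Proposition \ref{HQbounds}) forces $X_1 < \beta$, we have $P < 0$ for all finite $s$, so $P$ cannot tend to $+\infty$; the desired conclusion is a finite limit, so the one remaining possibility to be excluded is $P \to -\infty$. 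Note that Lemma \ref{FHKlemma} applied to this equation does \emph{not} by itself rule out $-\infty$, so a quantitative argument is required.

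The plan for excluding $-\infty$ is to show that, were it to occur, $P$ would be forced to blow up so fast that $X_1 - \beta = P\,W^2$ itself would become unbounded, which is absurd. Concretely, set $p := -P > 0$, so that $p^{\prime} = Hp - K$ and $p \to +\infty$ under the assumption. Fix a small $\delta \in (0, 1-\beta^2)$ and choose $s_1$ so negative that on $(-\infty, s_1]$ we have $H \le -(1+\beta^2-\delta)$, $|K| \le K_0$, and (using $p \to +\infty$) $p(s_1)$ as large as we please. From $p^{\prime} \le -(1+\beta^2-\delta)\,p + K_0$ together with the integrating factor $e^{(1+\beta^2-\delta)s}$, a routine Gronwall estimate produces a lower bound
\[
p(s) \ge C\, e^{-(1+\beta^2-\delta)s}, \qquad s \le s_1,
\]
for some constant $C > 0$.

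Finally I would multiply through by $W^2$. By Lemma \ref{lim-W}, $e^{-\beta^2 s}W$ tends to a positive limit, so $W^2 \ge c'\,e^{2\beta^2 s}$ for large negative $s$; hence
\[
|X_1 - \beta| = p\,W^2 \ge C c'\, e^{(\,\beta^2 - 1 + \delta\,)\,s}, \qquad s \le s_1 .
\]
Here the standing hyperbolicity hypothesis $d_1 > 1$ enters decisively: it gives $\beta^2 < 1$, so for $\delta < 1-\beta^2$ the exponent $\beta^2 - 1 + \delta$ is negative and the right-hand side tends to $+\infty$ as $s \to -\infty$. This contradicts the elementary bound $0 < X_1 < \beta$ (Lemma \ref{Xpos} together with $\Ha < 1$), which forces $|X_1-\beta| < \beta$. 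Hence $P$ cannot tend to $-\infty$. The main obstacle is precisely this rate comparison: everything hinges on the would-be growth rate $1+\beta^2$ of $p$ strictly exceeding the decay rate $2\beta^2$ of $1/W^2$, i.e. on $1+\beta^2 > 2\beta^2$, which is exactly the assumption $d_1 > 1$.
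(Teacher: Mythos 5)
Your proof is correct, and it takes a genuinely different route from the paper's. You work directly with the linear equation $P' = HP + K$ for $P=(X_1-\beta)/W^2$ --- essentially the same ODE the paper only introduces afterwards, in the proof of Lemma \ref{XbWlimit} --- and I checked your decomposition: with $H=-(Y_1^2+2\sum_j X_j^2-\epsilon W^2)$ one gets $K = X_1\,\bigl(\sum_j X_j^2+Y_1^2-1\bigr)/W^2 + \tfrac{\epsilon}{2}(\sqrt{d_1}-X_1)$, which indeed converges by Lemma \ref{rho}, and $H\to-(1+\beta^2)$. You then rule out $P\to-\infty$ by a rate comparison: the integrating-factor (Gronwall) estimate shows $|P|$ would have to grow at least like $e^{-(1+\beta^2-\delta)s}$, while Lemma \ref{lim-W} bounds the decay of $W^2$ below by $c'e^{2\beta^2 s}$, so $|X_1-\beta|=|P|\,W^2$ would blow up, contradicting $0<X_1<\beta$; your choice of $s_1$ with $p(s_1)>K_0/(1+\beta^2-\delta)$ makes this sound. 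The paper instead argues qualitatively: assuming $P\to-\infty$, it uses the $Y_1$-equation and L'H\^opital's rule to propagate the blow-up to $(Y_1-\hat{\beta})/W^2$, and then notes that both blow-ups enter $\bigl(\sum_j X_j^2+Y_1^2-1\bigr)/W^2$ with positive coefficients $2\beta$ and $2\hat{\beta}$ (the terms $X_j^2/W^2$, $j\geq 2$, tending to $0$ by Lemma \ref{XYbounded} and Prop \ref{gifinite}), forcing that quantity to $-\infty$ and contradicting Lemma \ref{rho} directly. Both arguments pivot on Lemma \ref{rho}, and both respect the paper's logical ordering. Yours additionally consumes Lemma \ref{lim-W}, which the paper's proof does not need; in exchange it avoids L'H\^opital, makes completely explicit where the standing hypothesis $d_1>1$ (i.e.\ $1+\beta^2>2\beta^2$) enters, and sets up the exact equation that is reused immediately in Lemma \ref{XbWlimit}, so the two lemmas share one computation.
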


\begin{proof}
As we saw in the proof of Lemma \ref{lim-W}, $X_1 < \beta$, so
$\frac{X_1 - \beta}{W^2} < 0$. Let us consider
\begin{equation} \label{Ybhat}
\frac{(Y_1 - \hat{\beta})^{\prime}}{2WW^{\prime}}=
\frac{Y_1}{2 \left(\sum_{j=1}^{r} X_j^2  - \frac{\epsilon}{2}W^2 \right)}
\left( \sum_{j=2}^{r} \frac{X_j^2}{W^2}- \frac{\epsilon}{2} + \frac{X_1 (X_1 - \beta)}{W^2} \right).
\end{equation}
The term outside the bracket on the right-hand side tends to
$\hat{\beta}/2\beta^2$, and the first term in the bracket tends to
zero by Lemma \ref{XYbounded} and Prop \ref{gifinite}. So if
$\frac{X_1 - \beta}{W^2}$ tends to $-\infty$, so does
$\frac{(Y_1 - \hat{\beta})^{\prime}}{(W^2)^{\prime}}$, and hence,
by L'H\^{o}pital's rule, so does $\frac{Y_1 - \hat{\beta}}{W^2}$.
But we also have
\[
\frac{\sum_{j=1}^{r} X_j^2 + Y_1^2 - 1}{W^2} =
\frac{ (X_1+\beta) (X_1 - \beta) + (Y_1 + \hat{\beta})(Y_1 - \hat{\beta}) +
 \sum_{j=2}^{r} X_j^2}{W^2}.
\]
As $s$ tends to $-\infty$, the left-hand side tends to a finite limit by
Lemma \ref{rho} while the right-hand side tends to $-\infty$, a contradiction.
\end{proof}

\begin{lemma} \label{XbWlimit}
\[
\lim_{s \rightarrow -\infty} \left( \frac{X_1 - \beta}{W^2} \right) =
\frac{\beta \rho + \frac{\epsilon}{2} \beta (d_1 -1)}{1 + \beta^2}.
\]
\end{lemma}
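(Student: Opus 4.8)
The plan is to show that the quantity $F := \frac{X_1 - \beta}{W^2}$ satisfies a scalar linear equation of the form $F' = HF + K$ to which Lemma \ref{FHKlemma} applies, and then to discard the two infinite alternatives that lemma allows by combining the sign of $F$ with Lemma \ref{XbWfin}. First I would differentiate $F$ directly, using $(X_1 - \beta)' = X_1'$ from (\ref{eqnX}) together with $(W^2)' = 2W^2\left(\sum_{j=1}^r X_j^2 - \frac{\epsilon}{2}W^2\right)$ from (\ref{eqnW}), obtaining
\[
F' = \frac{X_1'}{W^2} - 2F\left(\sum_{j=1}^r X_j^2 - \frac{\epsilon}{2}W^2\right),
\]
so the whole matter reduces to analysing $X_1'/W^2$.

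The delicate point, which I expect to be the main obstacle, is that $X_1'/W^2$ contains the term $\frac{Y_1^2}{\sqrt{d_1}\,W^2}$, and this diverges since $Y_1 \to \hat\beta > 0$ while $W \to 0$. The remedy is an exact algebraic cancellation: writing $\sum_j X_j^2 - 1 = \left(Y_1^2 + \sum_j X_j^2 - 1\right) - Y_1^2$ and using $\beta = 1/\sqrt{d_1}$, one checks that
\[
\frac{X_1\left(\sum_{j=1}^r X_j^2 - 1\right)}{W^2} + \frac{Y_1^2}{\sqrt{d_1}\,W^2}
= X_1\,\frac{Y_1^2 + \sum_{j=1}^r X_j^2 - 1}{W^2} - Y_1^2\, F,
\]
so the divergent pieces recombine into the finite quantity governed by Lemma \ref{rho}, plus a further multiple of $F$. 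Collecting everything yields $F' = HF + K$ with
\[
H = -Y_1^2 - 2\left(\sum_{j=1}^r X_j^2 - \frac{\epsilon}{2}W^2\right), \qquad
K = X_1\,\frac{Y_1^2 + \sum_{j=1}^r X_j^2 - 1}{W^2} + \frac{\epsilon}{2}\left(\sqrt{d_1} - X_1\right).
\]

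It then remains to read off the limits. As $s \to -\infty$ we have $X_1 \to \beta$, $Y_1^2 \to \hat\beta^2 = 1 - \beta^2$, $\sum_j X_j^2 \to \beta^2$ and $W \to 0$, so $H \to h := -(1-\beta^2) - 2\beta^2 = -(1+\beta^2) < 0$, while Lemma \ref{rho} gives $K \to k := \beta\rho + \frac{\epsilon}{2}(\sqrt{d_1} - \beta)$. The elementary identity $\sqrt{d_1} - \beta = \beta(d_1 - 1)$, immediate from $\beta = 1/\sqrt{d_1}$, rewrites this as $k = \beta\rho + \frac{\epsilon}{2}\beta(d_1 - 1)$. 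Lemma \ref{FHKlemma} now forces $F$ either to converge to $-k/h$ or to tend to $\pm\infty$. Since $X_1 < \beta$ (as recorded in the proof of Lemma \ref{lim-W}) we have $F < 0$, which rules out the limit $+\infty$, and Lemma \ref{XbWfin} rules out $-\infty$. Hence
\[
\lim_{s \rightarrow -\infty} F = -\frac{k}{h} = \frac{\beta\rho + \frac{\epsilon}{2}\beta(d_1 - 1)}{1 + \beta^2},
\]
as claimed; and even in the borderline case $k = 0$ the same sign constraints pin the bounded solution $F$ to the limit $-k/h = 0$.
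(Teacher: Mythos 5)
Your proposal is correct and takes essentially the same route as the paper: both derive a linear equation $F' = HF + K$ for $F = (X_1 - \beta)/W^2$, use Lemma \ref{rho} to get the coefficient limits $h = -(1+\beta^2)$ and $k = \beta\rho + \frac{\epsilon}{2}\beta(d_1-1)$, and then invoke Lemma \ref{FHKlemma}, excluding $+\infty$ by the sign $X_1 < \beta$ and $-\infty$ by Lemma \ref{XbWfin}. The only (immaterial) differences are the splitting of the right-hand side---the paper takes $H = -1 - \sum_j X_j^2 + \epsilon W^2$ with the constant $\beta$ multiplying the $\rho$-term, while you absorb $-Y_1^2$ into $H$ and keep the coefficient $X_1$, which yields the same ODE and the same limits---and your closing remark handling $k=0$, a hypothesis of Lemma \ref{FHKlemma} that the paper applies without checking.
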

\begin{proof}
We observe that $(X_1 - \beta)/W^2$ satisfies the differential
equation
\[
\left(\frac{X_1 - \beta}{W^2} \right)^{\prime} =
-\left(\frac{X_1 - \beta}{W^2}\right) \left(1+ \sum_{j=1}^{r} X_j^2 \right)
+ \frac{\beta}{W^2} \left( \sum_{j=1}^{r} X_j^2 + Y_1^2 -1 \right)
+ \frac{\epsilon}{2}(\sqrt{d_1}-X_1) + \epsilon(X_1 - \beta).
\]
By Lemma \ref{rho}, as $s \rightarrow -\infty$, the term
$\frac{\beta}{W^2}(\sum_{j=1}^{r} X_j^2 + Y_1^2 -1)$
on the right-hand side tends to $\rho \beta$ while the sum of the
last two terms
tend to $ \frac{\epsilon}{2} \beta (d_1 -1).$
Now Lemma \ref{FHKlemma} shows that as $s$ tends to
$-\infty$, $\frac{X_1 - \beta}{W^2}$ either tends to $-\infty$
or to the  claimed value. But it cannot tend to $-\infty$ by Lemma \ref{XbWfin},
so we are done.
\end{proof}

We now return to our analysis of the smoothness conditions.

\begin{prop} \label{giddotfin}
$\ddot{g_i}(0)$ is finite for $i>1$ and is zero if $i=1$.
\end{prop}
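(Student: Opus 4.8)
The plan is to read the claim directly off formula (\ref{gidotdot}) by computing the limit of $\ddot{g_i}$ as $s \to -\infty$ (equivalently $t \to 0$), handling the indices $i>1$ and $i=1$ separately. The two cases behave quite differently: for $i>1$ the warping function $g_i$ tends to a nonzero limit, whereas for $i=1$ both $g_1$ and the bracketed numerator in (\ref{gidotdot}) degenerate to zero, so the finiteness (indeed vanishing) must come from a cancellation.

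First I would dispose of the case $i>1$. Here $g_i(0)$ is finite and positive by Proposition \ref{gifinite}, so it suffices to show that the bracket in (\ref{gidotdot}) divided by $Y_i^2$ has a finite limit. Writing
$$\frac{1}{Y_i^2}\left(X_i^2 + Y_i^2 - \sqrt{d_i}X_i + \frac{\epsilon}{2}d_i W^2\right) = X_i\cdot\frac{X_i}{Y_i^2} + 1 - \sqrt{d_i}\,\frac{X_i}{Y_i^2} + \frac{\epsilon}{2}d_i\left(\frac{W}{Y_i}\right)^2,$$
each summand converges: $X_i \to 0$ while $X_i/Y_i^2$ has a finite limit by Lemma \ref{XYlimit}, and $W/Y_i \to \mu_i$ by Corollary \ref{WYlim}, finite by Proposition \ref{gifinite}. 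Hence $\ddot{g_i}(0)$ is finite.

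The substantive case is $i=1$, where I expect the only real work to lie. Here $\lambda_1 = d_1-1$, and $g_1 Y_1^2 = W\sqrt{d_1\lambda_1}\,Y_1 \sim \text{const}\cdot W$ since $Y_1 \to \hat{\beta}$; so I must show the bracket is $O(W^2)$ and then conclude $\ddot{g_1} = O(W) \to 0$. Expanding the bracket about the critical values $X_1 = \beta$, $Y_1 = \hat{\beta}$, the constant part is $\beta^2 + \hat{\beta}^2 - \sqrt{d_1}\beta = 1 - 1 = 0$, and the linear part is $\frac{2-d_1}{\sqrt{d_1}}(X_1-\beta) + 2\hat{\beta}(Y_1 - \hat{\beta})$. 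By Lemma \ref{XbWlimit} the quantity $(X_1-\beta)/W^2$ has a finite limit; and, now that this is known, the right-hand side of (\ref{Ybhat}) has a finite limit, so by L'H\^opital's rule (exactly as in the proof of Lemma \ref{XbWfin}) $(Y_1 - \hat{\beta})/W^2$ does too. Thus $X_1 - \beta$ and $Y_1 - \hat{\beta}$ are each $O(W^2)$, the remaining quadratic terms $(X_1-\beta)^2$ and $(Y_1-\hat{\beta})^2$ are $O(W^4)$, and $\frac{\epsilon}{2}d_1 W^2$ is manifestly $O(W^2)$. Hence the whole bracket is $O(W^2)$, giving $\ddot{g_1}(0) = 0$.

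The main obstacle is precisely this $i=1$ limit: because both numerator and denominator in (\ref{gidotdot}) vanish, naive substitution fails and one needs the sharper second-order information $X_1-\beta, \, Y_1-\hat{\beta} = O(W^2)$. The vanishing of the constant term (forced by $\hat{\beta}^2 = 1-\beta^2$ and $\sqrt{d_1}\beta = 1$) is what makes the cancellation work; the linear-order estimates then rely on Lemma \ref{XbWlimit} together with the L'H\^opital argument already set up in the proof of Lemma \ref{XbWfin}, behind which Lemma \ref{lim-W} governs the decay rate of $W$ itself.
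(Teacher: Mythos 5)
Your proof is correct. For $i>1$ it is essentially the paper's own argument: distribute the $Y_i^2$ into the bracket of (\ref{gidotdot}) and invoke Lemma \ref{XYlimit} together with Corollary \ref{WYlim} and Proposition \ref{gifinite}. For $i=1$, however, you take a genuinely different route. The paper substitutes the conservation law (\ref{cons}) and the identity $g_1 Y_1 = \sqrt{d_1\lambda_1}\,W$ into (\ref{gidotdot}), thereby eliminating $Y_1^2$ from the bracket entirely; $\ddot{g}_1$ becomes a sum of four terms, each of which tends to zero by Lemma \ref{XbWlimit}, Proposition \ref{lim-u}, Lemma \ref{XYbounded} with Proposition \ref{gifinite}, and the fact that $Y_1 \rightarrow \hat{\beta} \neq 0$, respectively. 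You instead Taylor-expand the bracket about $(X_1,Y_1)=(\beta,\hat{\beta})$, note that the constant term cancels (since $\beta^2+\hat{\beta}^2=1=\sqrt{d_1}\,\beta$), and control the linear terms by showing that both $(X_1-\beta)/W^2$ and $(Y_1-\hat{\beta})/W^2$ have finite limits -- the first is exactly Lemma \ref{XbWlimit}, and the second is a new step obtained by running the identity (\ref{Ybhat}) forward through L'H\^opital's rule (legitimate, since $(W^2)^{\prime}>0$ near $s=-\infty$ and the right-hand side of (\ref{Ybhat}) now has a finite limit thanks to Lemma \ref{XbWlimit}); this is precisely the positive-direction counterpart of the contrapositive use of (\ref{Ybhat}) in the paper's Lemma \ref{XbWfin}. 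The trade-off: the paper's route exploits the conservation law so that no direct second-order control of $Y_1$ is ever needed, but it requires Proposition \ref{lim-u} (finiteness of $u(0)$); your route bypasses the conservation law and Proposition \ref{lim-u} altogether, at the cost of establishing the extra asymptotic $Y_1-\hat{\beta}=O(W^2)$ -- which is in fact slightly stronger information about the trajectory near the critical point. Both arguments ultimately hinge on Lemma \ref{XbWlimit}, and both are complete.
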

\begin{proof}
For $i > 1$ we take the $Y_i^2$ factor in the denominator of (\ref{gidotdot})
and distribute it into the bracket. It is now clear from Lemma \ref{XYlimit}
and Cor \ref{WYlim} that the right-hand side tends to a finite limit as $t$
tends to zero.

If $i=1$, using the conservation law (\ref{cons}) and the fact
$g_1 Y_1 = \sqrt{d_1 \lambda_1} W$, we may rewrite (\ref{gidotdot}) as
\[
\ddot{g_1} = \lambda_1 g_1 \left(\frac{1 - \sqrt{d_1}X_1}
{d_1 \lambda_1 W^2}\right)
+ \frac{g_1}{d_1} \left(\epsilon u + C - (n-1)\frac{\epsilon}{2} \right)
-  \frac{g_1}{d_1} \sum_{j=2}^r \left( \frac{X_j^2}{Y_j^2} + 1\right) \frac{Y_j^2}{W^2}
  + \frac{\epsilon}{2} \sqrt{d_1 \lambda_1} \frac{W}{Y_1}  .
\]
Since $g_1(0)=0$, the first term
on the right-hand side tends to $0$ by Lemma \ref{XbWlimit}. The second
term of the right-hand side tends to zero by Prop \ref{lim-u}. The third
term on the right tends to zero by Lemma \ref{XYbounded} and Prop \ref{gifinite}.
The last term on the right clearly tends to $0$ since $Y_1$ tends to
$\hat{\beta} \neq 0$. Hence $\ddot{g_1}(0) = 0$.
\end{proof}

Similarly we can study the potential $u$. From the relation (\ref{def-u})
we find that

\begin{equation} \label{udot}
\dot{u} = \frac{1}{W} \left(\left(\sum_{j=1}^{r} \sqrt{d_j} X_j\right) -1 \right)
= \sqrt{d_1}\left(\frac{ X_1 - \beta}{W^2}\right)W
+ \sum_{i=2}^{r} \sqrt{d_i}\left(\frac{X_i}{Y_i^2}\right) \left(\frac{Y_i^2}{W^2}\right)W ,
\end{equation}
Hence $\dot{u}(0) = 0$ by Lemmas \ref{XbWlimit}, \ref{XYbounded} and
Prop \ref{gifinite}.

Upon differentiating (\ref{udot}) we obtain
\begin{equation} \label{udotdot}
\ddot{u} = \sum_{i=1}^{r} \, d_i \frac{\ddot{g_i}}{g_i} -\frac{\epsilon}{2}
= \sum_{i=1}^{r} \frac{d_i \lambda_i }{g_i^2 Y_i^2} \left(X_i^2 + Y_i^2
  -\sqrt{d_i}X_i + \frac{\epsilon}{2} d_i W^2 \right) - \frac{\epsilon}{2}.
\end{equation}
In view of Props \ref{gifinite} and \ref{giddotfin}, it remains to
see why $\ddot{g_1}/g_1$ tends to a finite value as $t \rightarrow 0$.
This follows from the argument in the proof of Prop \ref{giddotfin}.

\medskip
So our soliton $(g, u)$ is of class $C^2$. Finally, we will analyse
the limit of third derivatives of $g_i$ as $t$ approaches $ 0$.

If $i=1$, we can rewrite (\ref{dddotgi}) to obtain
\begin{eqnarray*}
\frac{d^3g_1}{dt^3} &=& \sqrt{\frac{\lambda_1}{d_1}} \, Y_1
     \left( \frac{\epsilon}{2}\left(\frac{2\sqrt{d_1}X_1 -d_1}{Y_1^2} \right) +
\sqrt{d_1} \left(\frac{X_1}{Y_1^2}\right) \sum_{j=2}^r X_j \, \frac{X_j}{Y_j^2} \left(\frac{Y_j}{W} \right)^2
\right.\\
   & & \left. + \sqrt{d_1} \left(\frac{X_1}{Y_1^2}\right) \left(\frac{1+ X_1^2}{W^2}\right)
        + \frac{1}{W^2}\left(-\frac{3X_1^2}{Y_1^2} + \beta X_1 +  \frac{\beta X_1^3}{Y_1^2} -1 \right) \right).
\end{eqnarray*}
The first term within the big bracket on the right-hand side certainly
tends to a finite limit and the second term tends to $0$ by Lemma \ref{XYbounded}
and Prop \ref{gifinite}. Now the analogous argument to that in the steady case
(two paragraphs before the statement of Theorem 4.17 in \cite{DW2})
with $\mathcal L$ replaced by $W^2$ shows that the third and fourth term
tend to a finite limit.

If $i > 1$, using (\ref{def-Yi}) in (\ref{dddotgi}) and the initial
conditions $Y_i(0)=0$, we see that in order for the third derivatives
to have $0$ as a limit, it suffices to show that
$$ \frac{1}{W^2}\left( -3 \frac{X_i^2}{Y_i^2} -1  + \frac{X_i}{\sqrt{d_i}}\left(1+\frac{X_i^2}{Y_i^2}\right)
    + \sqrt{d_i}\, \frac{X_i}{Y_i^2} \left(1+ \sum_{j=1}^r \, X_j^2 \right)
     + \frac{\epsilon}{2} \frac{W^2}{Y_i^2} (2\sqrt{d_i} X_i -d_i) \right)
$$
has a finite limit as $s \rightarrow -\infty$. Again by Lemma \ref{XYlimit}
and Prop \ref{gifinite}, it follows that $\frac{X_i}{W^2}$ and hence
$\frac{X_i^2}{Y_i^2 W^2}$ have finite limits, so we are reduced to showing that
\[
\frac{1}{W^2} \left( -\frac{1}{\sqrt{d_i}} + \frac{X_i}{Y_i^2} (1 + X_1^2 )
  -\frac{\epsilon}{2}\sqrt{d_i} \frac{W^2}{Y_i^2} \right)
\]
has a finite limit.

Now  $1 + X_1^2 = 1 + \beta^2$ and $W/Y_i = \mu_i$ modulo terms
which tend to a finite limit when divided by $W^2$ (for the second case
we use L'Hopital's rule).  So we just have to check that
\[
Q_i:=  \frac{1}{W^2} \left(\frac{X_i}{Y_i^2} -
  \frac{\frac{1}{\sqrt{d_i}}+ \frac{\epsilon}{2}\sqrt{d_i} \mu_i^2}{1 + \beta^2}\right)
\]
has a  finite limit.

Let $\tilde{\mu}_i :=\frac{\frac{1}{\sqrt{d_i}}+ \frac{\epsilon}{2}\sqrt{d_i} \mu_i^2}{1 + \beta^2}.$
Proceeding as in the steady case, we find that $Q_i$ satisfies the equation
\begin{eqnarray*}
 Q_i^{\prime} &=& \left(-1 - 3 \sum_{j} X_j^2  + 2\frac{X_i}{\sqrt{d_i}} + \epsilon W^2 \right) Q_i \\
   & & + \left[\frac{\tilde{\mu}_i}{W^2}\left(-1-\sum_{j=1}^r X_j^2 + 2\frac{X_i}{\sqrt{d_i}} \right)
   + \frac{1}{W^2} \left(\frac{1}{\sqrt{d_i}} + \frac{\epsilon}{2} \frac{W^2}{Y_i^2}(\sqrt{d_i} + X_i)
 \right) \right].
\end{eqnarray*}
The coefficient of $Q_i$ on the right-hand side tends to $-1-3\beta^2$ as $s \rightarrow -\infty$.
Using arguments similar to those above (cf. the arguments in the steady case
three paragraphs before the statement of Theorem 4.17 in \cite{DW2}),
 we can also check that the term
in square brackets on the right tends to a finite positive limit.
So the hypotheses of Lemma \ref{FHKlemma} are satisfied, and $Q_i$ either
tends to a finite limit or to $+\infty$ or $-\infty$.  But one can also compute that
\[
\frac{\left(\frac{X_i}{Y_i^2} - \tilde{\mu}_i \right)^{\prime}}{(W^2)^{\prime}}
= \frac{\left(\frac{X_i}{Y_i^2} - \tilde{\mu}_i \right)}{W^2}
   \left( \frac{-(1 + \beta^2)}{2 \left(\sum_{j=1}^{r} X_j^2
- \frac{\epsilon}{2}W^2 \right)} \right)+ R_i
\]
where $R_i$ tends to a finite limit. So
if the limit of $Q_i$ is infinite, L'H\^{o}pital's rule gives a
contradiction, as the term in the final bracket has a negative limit.

\medskip
We have now shown that the metric is $C^3$ and so by the discussion on
regularity near the beginning of \S 3 in \cite{DW2}, the soliton is smooth.
To summarise we may now state the

\begin{thm} \label{mainthm}
Let $M_2, \cdots, M_r$ $($$r \geq 1$$)$ be complete Einstein manifolds with
positive scalar curvature. For $d_1 > 1$ there is an $r$-parameter family
of smooth complete expanding gradient Ricci solitons on the trivial rank
$d_1 + 1$ vector bundle over $M_2 \times \cdots \times M_r$. \qed
\end{thm}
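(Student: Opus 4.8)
The plan is to assemble the pieces established in Sections 1--4 into a single geometric statement; essentially no new computation is required, only a synthesis. First I would fix, for each admissible choice of the $r$ free initial directions, one of the trajectories $\gamma$ produced in Proposition \ref{HQbounds}. These emanate from the hyperbolic critical point (\ref{critpoint}) as $s\to-\infty$, satisfy $\Ha<1$ and $\Qa<0$ for all finite $s$, and by Corollary \ref{bounds} exist for all $s\in(-\infty,\infty)$ with $W,Y_i>0$. From such a trajectory one recovers a warped-product metric (\ref{metric}) together with a radial potential $u$ via the inversion formulas (\ref{def-tgi}), (\ref{def-u}) and (\ref{logdiff-gi}); by construction this pair solves (\ref{gradRS}), so the whole content of the theorem is that the recovered object closes up to a \emph{smooth, complete} soliton on the claimed manifold.

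Next I would analyze the two ends of the $t$-interval separately. At the infinite end ($s\to+\infty$) Theorem \ref{omegalimit} shows $\gamma$ converges to the origin, and Lemma \ref{complete} shows the geodesic distance $\int^{\infty}W\,ds$ diverges, so the metric is complete there (the estimates following Lemma \ref{limsigma} further pin down the expected asymptotically conical growth of the $g_i$). At the collapsing end ($s\to-\infty$, i.e. $t\to0$) I would verify the regularity boundary conditions (\ref{bdy0})--(\ref{bdyu}): Proposition \ref{limit-g1} supplies $g_1(0)=0$, $\dot g_1(0)=1$, $\dot g_i(0)=0$; Proposition \ref{gifinite} supplies $g_i(0)=\sqrt{\lambda_i d_i}\,\mu_i>0$ for $i>1$; Proposition \ref{lim-u} gives finiteness of $u(0)$ and $\dot u(0)=0$; and Proposition \ref{giddotfin} with (\ref{udotdot}) handles the second derivatives. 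The $C^3$ control coming from Lemma \ref{XbWlimit} and the $Q_i$-estimate then feeds into the regularity criterion quoted from \cite{DW2} to upgrade the metric from $C^3$ to genuinely smooth.

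Finally I would read off the topology and the parameter count. With the normalization $\lambda_1=d_1-1$ and $M_1=S^{d_1}$, the conditions above say precisely that the round sphere factor collapses smoothly to a point while the remaining factors $M_2\times\cdots\times M_r$ stay at positive size $l_i$; near $t=0$ the metric therefore looks like $dt^2+t^2 h_1+\cdots$, which is the smooth closing-up of the fibre $\mathbb R^{d_1+1}$ in polar coordinates over the base, identifying the total space with the trivial rank $d_1+1$ vector bundle over $M_2\times\cdots\times M_r$. The $r$ parameters are inherited directly from the $r$-dimensional family of admissible unstable directions at (\ref{critpoint}) selected in Proposition \ref{HQbounds}, and non-K\"ahlerness in the generic case follows by arranging the total dimension $d_1+1+\sum_{i>1}d_i$ to be odd. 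The genuine obstacle here -- and the reason almost all of the work lives in Sections 2--4 rather than in this final assembly -- is the delicate $s\to-\infty$ asymptotic analysis needed to push regularity all the way to $C^3$: controlling the quotients $X_i/Y_i^2$ and $(X_1-\beta)/W^2$ to the required order, where the absence of the steady-case Lyapunov function forces the centre-manifold and L'H\^opital arguments used above. By contrast, completeness and the qualitative picture at the infinite end are comparatively soft.
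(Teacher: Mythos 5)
Your proposal is correct and takes essentially the same route as the paper: Theorem \ref{mainthm} is stated there as a summary whose proof is exactly the synthesis you give --- the trajectories of Proposition \ref{HQbounds} with global existence from Corollary \ref{bounds}, convergence and completeness at the infinite end (Theorem \ref{omegalimit}, Lemma \ref{complete}), the boundary conditions and second/third derivative limits as $t \to 0$ (Propositions \ref{limit-g1}, \ref{gifinite}, \ref{lim-u}, \ref{giddotfin} and the $Q_i$-estimate), and the upgrade from $C^3$ to smoothness via the regularity discussion quoted from \cite{DW2}. Your identification of where the real work lies (the $s \to -\infty$ asymptotics, forced by the loss of the steady-case Lyapunov function) also matches the paper's own emphasis, so there is nothing to add.
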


\begin{rmk} \label{Bohm}
Recall that C. B\"ohm proved in \cite{B} that there is also an $r-1$-parameter
family of complete Einstein metrics with negative scalar curvature
on the above manifolds. These metrics correspond to trajectories which lie
in the invariant submanifold $\{\Qa = 0, \Ha =1\}$ and which converge to
the point $E_+$ (cf Remark \ref{einstein}).
When $r=1$, the invariant submanifold is a curve, and this is precisely the
hyperbolic trajectory mentioned in \cite{Cetc}, corresponding to the hyperbolic
metric on $\R^{d_1+1}$. It follows easily from our equations that the mean curvature
of the hypersurfaces tends to the constant $\sqrt{n \epsilon/2}$ as $t \rightarrow
+\infty$ and the metric coefficients $g_i^2$
 grow exponentially fast (compare Remark \ref{asymp}).

Our proof of Theorem \ref{mainthm}, with appropriate minor modifications, then
gives an alternative proof of B\"ohm's result. (In particular, since we are
in the Einstein case, we no longer need to consider the third order derivatives
in the regularity analysis.)
\end{rmk}

Finally, recall that the metrics of our steady solitons in \cite{DW2} have non-negative
Ricci curvature. The analogous fact in the expanding case is

\begin{prop} \label{concave}
The soliton potentials $u$ in the examples in Theorem \ref{mainthm}
are concave, indeed strictly concave off the zero section. Hence
$\ric(g) + \frac{\epsilon}{2}g$ is positive semi-definite. In particular,
$-u$ is subharmonic.
\end{prop}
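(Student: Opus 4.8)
The plan is to extract the sign of the Hessian of $u$ directly from the soliton equation together with the ODE analysis already in place. Since $u=u(t)$ and (\ref{gradRS}) reads $\ric(g)+{\rm Hess}(u)+\frac{\epsilon}{2}g=0$, we have $\ric(g)+\frac{\epsilon}{2}g=-{\rm Hess}(u)$, so the claimed positive semi-definiteness of $\ric(g)+\frac{\epsilon}{2}g$ is precisely concavity of $u$, and then $\Delta(-u)=-\,{\rm tr}_g\,{\rm Hess}(u)\ge 0$ gives subharmonicity of $-u$ at no extra cost. Thus everything reduces to showing ${\rm Hess}(u)\le 0$, with strict negativity for $t>0$. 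As $u$ depends only on $t$, ${\rm Hess}(u)$ is diagonal in the warped-product frame: the $\partial_t$-eigenvalue is $\ddot u$, and the eigenvalue on each factor $M_i$ is $\dot u\,\frac{\dot g_i}{g_i}$ (with multiplicity $d_i$).

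First I would dispose of the factor directions, which are easy. By (\ref{logdiff-gi}) and Lemma \ref{Xpos}, $\frac{\dot g_i}{g_i}=W^{-1}X_i/\sqrt{d_i}>0$ for finite $s$ (recall $W>0$), while $\dot u=(\Ha-1)/W<0$ there, since $u'=W\dot u=\tr(L)\,W-1=\Ha-1$ and $\Ha<1$ by Proposition \ref{HQbounds}. Hence each factor eigenvalue $\dot u\,\frac{\dot g_i}{g_i}$ is strictly negative for $t>0$.

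The substantive point is the $\partial_t$-eigenvalue. Differentiating $\dot u=(\Ha-1)/W$ and feeding in (\ref{eqnHQ1}) (note that there $f_2=1$) together with (\ref{eqnW}), a short computation gives
\[
\ddot u=\frac{1-\Ha+\Qa}{W^2},
\]
so it suffices to prove $\Phi:=1-\Ha+\Qa<0$ for all finite $s$. Here $1-\Ha>0$ while $\Qa<0$, so the sign is genuinely in question; this is the heart of the matter. I would control $\Phi$ through its own equation, which (\ref{eqnH}) and (\ref{eqnQ}) yield in the form
\[
\Phi'=(\Ha-1)\Big(\sum_j X_j^2+\tfrac{\epsilon}{2}W^2\Big)+\Big(2\sum_j X_j^2-\epsilon W^2-1\Big)\Phi .
\]
The inhomogeneous term is strictly negative for finite $s$, so at any finite zero of $\Phi$ one has $\Phi'<0$: thus $\Phi$ can meet the value $0$ only while decreasing, and once negative it stays negative.

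The hard part will be launching this first-crossing argument at the infinite end, since $\Phi\to 0$ and $W\to 0$ together as $s\to-\infty$, which makes the sign of $\lim\Phi/W^2=\ddot u(0)$ delicate. I would settle it using the conservation law in the form $\Qa=W^2(C+\epsilon u)$: combining it with Lemmas \ref{rho} and \ref{XbWlimit} gives $\ddot u(0)=\frac{\beta^2}{1+\beta^2}\,(C+\epsilon u(0))$, and since $\Qa<0$ forces $C+\epsilon u<0$ we obtain $\ddot u(0)\le 0$, hence $\Phi<0$ for large negative $s$ in the generic case. In the borderline case $\ddot u(0)=0$ I would instead use that the coefficient $2\sum_j X_j^2-\epsilon W^2-1$ tends to $2\beta^2-1\le 0$ and is in fact negative for large negative $s$ (because $X_1<\beta$ makes $\sum_j X_j^2-\beta^2<0$ to leading order, the factors $j>1$ contributing only at higher order by Lemmas \ref{XYlimit} and \ref{gifinite}); on that region $\Phi\ge 0$ would force $\Phi'<0$, contradicting $\Phi\to 0$. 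Either way $\Phi<0$ for all finite $s$, so $\ddot u<0$ off the zero section; assembling the two computations gives ${\rm Hess}(u)<0$ for $t>0$ and ${\rm Hess}(u)\le 0$ throughout by continuity, which yields all three assertions.
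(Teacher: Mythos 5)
Your proof is correct, and at its core it follows the same strategy as the paper: the same reduction to the signs of $\dot u\,\dot g_i/g_i$ and $\ddot u$, the same identity $\ddot u=(\Qa+1-\Ha)/W^2$, and a sign-propagation argument for $\Phi:=\Qa+1-\Ha$ via its ODE, contradicting $\Phi\to 0$ as $s\to-\infty$. The genuine difference is how the ODE is grouped, and this is exactly where your route becomes more laborious than necessary. Your form
\[
\Phi' = (\Ha-1)\Bigl(\textstyle\sum_j X_j^2+\tfrac{\epsilon}{2}W^2\Bigr)
+\bigl(2\textstyle\sum_j X_j^2-\epsilon W^2-1\bigr)\Phi
\]
has a coefficient of $\Phi$ that is not manifestly negative (its limit $2\beta^2-1$ vanishes when $d_1=2$), so your crossing argument controls only exact zeros of $\Phi$, and you are forced to establish $\Phi<0$ near $s=-\infty$ separately; that costs you the conservation-law computation of $\ddot u(0)$ via Lemmas \ref{rho} and \ref{XbWlimit} (your identity $\ddot u(0)=\frac{\beta^2}{1+\beta^2}(C+\epsilon u(0))$ does check out, and is a nice byproduct, but it only gives $\ddot u(0)\le 0$), plus the borderline-case analysis of the coefficient's asymptotic sign. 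The paper groups the same equation as
\[
\Phi' = \Phi\bigl(G-1-\epsilon W^2\bigr)+\Qa\,G+\tfrac{\epsilon}{2}(\Ha-1)W^2,
\qquad G=\sum_i X_i^2,
\]
for which the coefficient of $\Phi$ is negative for \emph{all} finite $s$ (since $\Ly<0$ gives $G<1$) and both inhomogeneous terms are negative by Proposition \ref{HQbounds} and Lemma \ref{Xpos}; then $\Phi(s_0)\ge 0$ at any single point forces $\Phi$ to be strictly decreasing on all of $(-\infty,s_0]$, contradicting $\Phi\to 0$ in one stroke, with no case distinction and no appeal to the asymptotic limits of the regularity analysis. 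In short: both proofs are valid, yours buys an explicit formula for $\ddot u(0)$ at the price of extra machinery, while the paper's regrouping makes the entire sign argument self-contained and immediate.
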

\begin{proof} It suffices to show that $\frac{\dot{g}_i}{g_i}\dot{u}$ and $\ddot{u}$
are negative for $0< t < +\infty$.  The first fact follows from (\ref{logdiff-gi}),
(\ref{udot}), the positivity of $W, X_i$ and Prop \ref{HQbounds}. To see the second,
note that $\ddot{u} = (\Qa +1 -\Ha)/W^2$ by (\ref{udotdot}) and consider the equation
$$ (\Qa +1 - \Ha)^{\prime} =
(\Qa + 1 - \Ha)(G- 1 - \epsilon W^2) + \Qa G + \frac{\epsilon}{2}(\Ha -1) W^2,$$
which follows from (\ref{eqnH})-(\ref{eqnQ}) and where $G = \sum_i X_i^2$.
By Prop \ref{HQbounds}, on the right-hand side, the second factor of the first
term and the second and third terms  are all negative. So if $\Qa + 1 -\Ha \geq 0$ at some $s_0$,
it has to be strictly decreasing on the left of $s_0$. This contradicts the fact
that $\Qa + 1 - \Ha$ tends to $0$ as $s \rightarrow -\infty$.
\end{proof}

\end{document}